\DeclareMathAlphabet{\mathpzc}{OT1}{pzc}{m}{it}
\newtheorem{theorem}{Theorem}[section]
\newtheorem{proposition}[theorem]{Proposition}
\newtheorem{lemma}[theorem]{Lemma}
\newtheorem*{theorem*}{Theorem}
\newtheorem*{proposition*}{Proposition}
\newtheorem*{corollary*}{Corollary}
\newtheorem*{lemma*}{Lemma}
\newtheorem*{conjecture*}{Conjecture}
\theoremstyle{definition}
\newtheorem{definition}[theorem]{Definition}
\newtheorem*{definition*}{Definition}
\theoremstyle{remark}
\newtheorem{example}[theorem]{Example}
\newtheorem{examples}[theorem]{Examples}
\newtheorem{remark}[theorem]{Remark}
\newtheorem{remarks}[theorem]{Remarks}
\newtheorem{property}[theorem]{Property}
\newtheorem{properties}[theorem]{Properties}
\newtheorem*{example*}{Example}
\newtheorem*{examples*}{Examples}
\newtheorem*{remark*}{Remark}
\newtheorem*{remarks*}{Remarks}
\newtheorem*{exercise*}{Exercise}
\newtheorem*{property*}{Property}
\newtheorem*{properties*}{Properties}
\newcommand\id{\mathrm{id}}
\newcommand\ten{\otimes}
\renewcommand\H{\mathrm{H}}
\newcommand\Z{\mathbb{Z}}
\newcommand\Q{\mathbb{Q}}
\newcommand\R{\mathbb{R}}
\newcommand\bG{\mathbb{G}}
\newcommand\cA{\mathcal{A}}
\newcommand\cD{\mathcal{D}}
\newcommand\cH{\mathcal{H}}
\newcommand\cT{\mathcal{T}}
\renewcommand\O{\mathscr{O}}
\newcommand\sA{\mathscr{A}}
\newcommand\sF{\mathscr{F}}
\newcommand\sH{\mathscr{H}}
\newcommand\fX{\mathfrak{X}}
\newcommand\fY{\mathfrak{Y}}
\renewcommand\L{\Lambda}
\newcommand\Ho{\mathrm{Ho}}
\newcommand\Alg{\mathrm{Alg}}
\newcommand\Hom{\mathrm{Hom}}
\newcommand\HHom{\underline{\mathrm{Hom}}}
\newcommand\Ext{\mathrm{Ext}}
\newcommand\Top{\mathrm{Top}}
\newcommand\Spec{\mathrm{Spec}\,}
\newcommand\Set{\mathrm{Set}}
\newcommand\Aff{\mathrm{Aff}}
\newcommand\Sing{\mathrm{Sing}}
\newcommand\LLim{\varinjlim}
\newcommand\xra{\xrightarrow}
\newcommand\bt{\bullet}
\newcommand\by{\times}
\newcommand\et{\acute{\mathrm{e}}\mathrm{t}}
\newcommand\cart{\mathrm{cart}}
\newcommand\pd{\partial}
\newcommand\Hilb{\cH\mathit{ilb}}
\newcommand\cosk{\mathrm{cosk}}
\newcommand\co{\colon\thinspace}
\newcommand\oR{\mathbf{R}}
\newcommand\uleft\underleftarrow
\newcommand\uline\underline
\newcommand\uright\underrightarrow
\begin{document}

\begin{abstract}
This is an informal summary of the main concepts in \cite{stacks2}, based on notes of various seminars.  It gives constructions of  higher and derived stacks without recourse to the extensive theory developed by To\"en, Vezzosi and Lurie. Explicitly, higher stacks are described in terms of simplicial diagrams of affine schemes, which are analogous to atlases for manifolds. We also describe quasi-coherent sheaves and complexes on such objects. 
\end{abstract}

\title{Notes  characterising  higher and derived stacks concretely} 
\author{J.P.Pridham}
\thanks{This work was supported by the Engineering and Physical Sciences Research Council [grant number  EP/F043570/1].}
\maketitle

\section*{Introduction}

\subsection*{The need for simplicial objects}

Any scheme $X$ 
gives rise to a functor from rings 
to sets, sending $A$ to 
$X(A)= \Hom(\Spec A, X)$.
Likewise, any algebraic stack gives a functor from rings 
to groupoids. When $X$ is a moduli space or stack, the points of $X$ have a geometric meaning. For instance
\[
\Hilb_Y(A)= \{\text{closed subschemes of } Y \by  \Spec A, \text{ flat over } A\}.
\]

In derived algebraic geometry, the basic building blocks are  simplicial rings, or equivalently in characteristic $0$, dg rings (collectively referred to as derived rings).   Thus derived moduli spaces and stacks have to give rise to functors on derived rings $d\Alg$.
To understand what such a functor must be, we start with a derived ring $A$ 
and look at possible functors associated to $\Spec A$.
\begin{enumerate}
\item The obvious candidate is the functor 
$\Hom_{d\Alg}(A,-)\co d\Alg \to \Set$.
This is clearly no good, as it does not map quasi-isomorphisms to isomorphisms, even when $A$ is a polynomial ring.

\item This suggests the functor 
$\Hom_{\Ho(d\Alg)}(P,-)\co d\Alg \to \Set$,
where 
$\Ho(d\Alg)$ 
is the homotopy category (given by formally inverting quasi-isomorphisms). This works well for infinitesimal derived deformation theory (as in \cite{Man2}), but is not left-exact. Thus it cannot sheafify, so will not give a good global theory.

\item The solution is to look at the derived  $\Hom$-functor $\oR\HHom$, which takes values in simplicial sets (to be defined in the sequel). This maps quasi-isomorphisms to weak equivalences, and has good exactness properties.
Thus even if we start with a moduli problem without automorphisms, the derived problem leads us to consider simplicial sets. 
\end{enumerate}

\subsection*{Where do simplicial objects come from?}

Simplicial resolutions of schemes will be familiar to anyone who has computed \v Cech cohomology. Given a quasi-compact semi-separated scheme $Y$, 
we may take a finite affine cover $U=\coprod_i U_i$ of $Y$, and define the simplicial scheme $\check{Y}$ to be the \v Cech nerve $\check{Y}:= \cosk_0(U/Y)$. Explicitly,  
\begin{eqnarray*}
\check{Y}_n= \overbrace{U\by_{Y}U \by_{Y} \ldots  \by_{Y}U}^{n+1}
= \coprod_{i_0, \ldots, i_n} U_{i_0}\cap \ldots \cap U_{i_n},
\end{eqnarray*}
so $\check{Y}_n$ is an affine scheme, and $\check{Y}$ is the unnormalised \v Cech resolution of $Y$.

Given  a quasi-coherent sheaf $\sF$ on $Y$, we can then form a cosimplicial complex $\check{C}^n(Y, \sF):= \Gamma(\check{Y}_n, \sF)$, and of course Zariski cohomology is given by
\[
 \H^i(Y, \sF) \cong \H^i \check{C}^{\bt}(Y, \sF).      
\]
 
Likewise, if $\fY$ is a semi-separated Artin stack, we can choose a presentation $U \to \fY$ with $U$ affine, and set $\check{Y}:= \cosk_0(U /\fY)$, so
\[
 \check{Y}_n= \overbrace{U\by_{\fY}U \by_{\fY} \ldots  \by_{\fY}U}^{n+1}.       
\]
Resolutions of this sort were used by Olsson in \cite{olssartin} to study quasi-coherent sheaves on Artin stacks.

\subsection*{Questions}

\begin{enumerate}
        \item Which simplicial affine schemes correspond to schemes, Artin stacks or Deligne--Mumford stacks in this way?

\item What about higher stacks? 

[For an example of a higher stack, moduli of perfect complexes $\sF$ on $X$ will give an $n$-stack provided we restrict to complexes with $\Ext_X^i(\sF,\sF)=0 $ for all $i\le-n$. Similarly, a $2$-stack governs moduli of stacky curves.]

\item What about derived schemes and stacks?

\item What do quasi-coherent sheaves then look like?
\end{enumerate}

The first two questions will be addressed in Theorem \ref{bigthm}, and the third in Theorem \ref{dbigthm}. The fourth will be addressed in \S\S \ref{qcohsn} and \ref{dqcohsn}.

 I would like to thank Jo\~ao Pedro dos Santos for making many helpful suggestions and correcting several errors.




\section{Hypergroupoids}

\subsection{Simplicial sets}

\begin{definition}
 Define $|\Delta^n|$ to be the geometric $n$-simplex $\{ x \in \R_+^{n+1}\,:\, \sum_{i=0}^nx_i=1\}$. Write $\pd^i\co |\Delta^{n-1}|\to |\Delta^{n}|$ for the inclusion of the $i$th face, and $\sigma^i\co |\Delta^{n+1}|\to |\Delta^{n}| $ for the map given by collapsing the edge $(i, i+1)$.
\end{definition}

\begin{definition}
Given a topological space $X$, define $\Sing(X)_n$ to be the set of continuous maps from $|\Delta^n|$ to $X$. These fit into a diagram  
\[
\xymatrix@1{ \Sing(X)_0 \ar@{.>}[r]|{\sigma_0}& \ar@<1ex>[l]^{\pd_0} \ar@<-1ex>[l]_{\pd_1} \Sing(X)_1 \ar@{.>}@<0.75ex>[r] \ar@{.>}@<-0.75ex>[r]  & \ar[l] \ar@/^/@<0.5ex>[l] \ar@/_/@<-0.5ex>[l] 
\Sing(X)_2 &  &\ar@/^1pc/[ll] \ar@/_1pc/[ll] \ar@{}[ll]|{\cdot} \ar@{}@<1ex>[ll]|{\cdot} \ar@{}@<-1ex>[ll]|{\cdot}  \Sing(X)_3 & \ldots&\ldots,}
\]
where the arrows satisfy various relations such as $\pd_i\sigma_i= \id$. (Note that contravariance has turned superscripts into subscripts). 

Any diagram of this form is called a simplicial set. (For a rigorous definition, see \cite[\S 8.1]{W}) 
We will denote the category of simplicial sets by $s\Set$. We can define simplicial diagrams in any category similarly, while cosimplicial diagrams are given by reversing all the arrows.
\end{definition}

If $A_{\bt}$ is a simplicial abelian group, then note that setting $d:= \sum_{i=0}^n(-1)^i\pd_i \co A_n \to A_{n-1}$ gives maps satisfying $d^2=0$, so $A_{\bt}$ becomes a chain complex.

\begin{definition}
For $n \ge 0$, the combinatorial $n$-simplex $\Delta^n \in s\Set$ is characterised by the property that $\Hom_{s\Set}(\Delta^n, K) \cong K_n$ for all simplicial sets $K$. Its boundary $\pd\Delta^n \subset \Delta^n$ is given by $\bigcup_{i=0}^n\pd^i(\Delta^{n-1})$ (taken to include the case $\pd\Delta^0=\emptyset$), and for $n \ge 1$ the $k$th horn $\L^{n,k}$ is given by $\bigcup_{\substack{0 \le i \le n\\ i \ne k}}\pd^i(\Delta^{n-1})$.
\end{definition}

\begin{definition}
There is a geometric realisation functor $|-|\co s\Set \to \Top$, left adjoint to $\Sing$. This is characterised by the properties that it preserves colimits and that $|\Delta^n|=|\Delta^n|$.
\end{definition}

Draw a picture of $|\L^{2,k}|$ or $|\L^{3,k}|$ and you will see the reasoning for both the term horn and the notation $\L$.

\begin{definition}
 A map $f\co K \to L$ in $s\Set$ is said to be a weak equivalence if $|f|\co |K| \to |L|$ is a weak equivalence (i.e. induces isomorphisms on all homotopy groups).
\end{definition}
Note that the canonical maps $|\Sing(X)|\to X$ and hence $K \to \Sing(|K|)$ are always weak equivalences, so $s\Set$ and $\Top$ have the same homotopy theory (\cite[Theorem 11.4]{sht}). 

\subsection{Hypergroupoids}

\begin{definition}\label{duskindef}
\cite{duskin, glenn}: A (Duskin--Glenn) $n$-hypergroupoid (often also called a weak $n$-groupoid) is an object $X \in s\Set$ for which the partial matching maps
$$
X_m= \Hom_{s\Set}(\Delta^m, X) \to \Hom_{s\Set}(\L^{m,k}, X) 
$$
are surjective for all $m,k$,   and isomorphisms for all $m> n$ and all $k$. 
\end{definition}
The first condition is equivalent to saying that $X$ is a Kan complex, or fibrant. 

\begin{examples}
\begin{enumerate}
\item
A $0$-hypergroupoid is just a set $X=X_0$. 

\item
\cite[\S 2.1]{glenn} (see also \cite[Lemma I.3.5]{sht}): Every $1$-hypergroupoid $X$ arises as the nerve $B\Gamma$ of some groupoid $\Gamma$, so is given by
\[
(B\Gamma)_n= \coprod_{x_0, \ldots, x_n}\Gamma(x_0,x_1)\by \Gamma(x_1,x_2)\by\ldots  \by \Gamma(x_{n-1},x_n).       
\]
 Moreover, $\Gamma$ can be recovered from $X$  by taking objects $X_0$, morphisms $X_1$, source and target $\pd_0, \pd_1: X_1 \to X_0$, identity $\sigma_0 : X_0 \to X_1$ and multiplication
$$
X_1 \by_{\pd_0, X_0, \pd_1} X_1 \xra{(\pd_2,\pd_0)^{-1}} X_2 \xra{\pd_1} X_1.
$$
Equivalently, $\Gamma$ is the fundamental groupoid $\pi_fX$ of $X$ (as in \cite[\S I.8]{sht}).

\item Under the Dold--Kan correspondence between non-negatively graded chain complexes and simplicial abelian groups (\cite[\S 8.4]{W}), $n$-hypergroupoids in abelian groups correspond to chain complexes concentrated in degrees $[0,n]$. 
\end{enumerate}
\end{examples}

\begin{properties}\label{hgpdprops}
\begin{enumerate}
 \item  For an $n$-dimensional hypergroupoid $X$,  $\pi_mX=0$ for all $m>n$.

\item \label{truncate}\cite[Lemma \ref{stacks-truncate}]{stacks2}: An $n$-hypergroupoid $X$ is completely determined by its truncation $X_{\le n+1}$. Explicitly, $X= \cosk_{n+1}X$, where the $m$-coskeleton $\cosk_mX$ is given by  $(\cosk_mX)_i= \Hom((\Delta^i)_{\le m}, X_{\le m})$.
Moreover, a simplicial set of the form $\cosk_{n+1}X$ is an $n$-hypergroupoid if and only if  it satisfies the conditions of Definition \ref{duskindef} up to level $n+2$. 

When $n=1$, these statements amount to saying that a groupoid is uniquely determined by its objects (level $0$), morphisms and identities (level $1$) and multiplication (level $2$). However, we do not know we have a groupoid until we check associativity (level $3$). 
\end{enumerate}
\end{properties}

There is also a notion of relative $n$-hypergroupoids $X \to Y$, expressed in terms of (unique) liftings of
\[
 \xymatrix{ \L^{m,k}\ar[d] \ar[r] & X \ar[d]\\
\Delta^m \ar@{.>}[ur]\ar[r] & Y.
}       
\]

For example, a relative $0$-dimensional hypergroupoid $f\co X \to Y$ is a Cartesian morphism, in the sense that the maps
$$
X_n\xra{(\pd_i, f)} X_{n-1}\by_{Y_{n-1}, \pd_i}Y_n
$$
are all isomorphisms. Given $y \in Y_0$, we can write $F(y):= f_0^{-1}\{y\}$, and observe that $f$ is equivalent to a local system on $Y$ with fibres $F$.
The analogue of Property \ref{hgpdprops}.\ref{truncate} above also holds in the relative case. Level $0$ gives us the fibres $F(y)$, level $1$ gives us the descent data $\theta(z)\co F(\pd_0z) \cong F(\pd_1z)$ for $z \in Y_1$ (thereby determining the local system uniquely), but  we do not know we have a groupoid until we check the cocycle condition (level $2$):  $\theta(\pd_2w)\circ \theta(\pd_0w) = \theta(\pd_1w)$ for all $w \in Y_2$.      


\section{Higher stacks}

We will now show how to develop the theory of higher Artin stacks. For other types of stack, just modify the notion of covering. In particular, for Deligne--Mumford stacks, replace ``smooth'' with ``\'etale'' throughout. For simplicity of exposition, we will assume that everything is quasi-compact, quasi-separated etc. (strongly quasi-compact in the terminology of \cite{hag2}) --- to allow more general objects, replace affine schemes with arbitrary disjoint unions of  affine schemes. 

Given a simplicial set $K$ and a simplicial affine scheme $X$, there is an affine scheme $\Hom_{s\Set}(K, X)$ with the property that for all rings $A$, $\Hom_{s\Set}(K, X)(A)= \Hom_{s\Set}(K,X(A))$. Explicitly, when $K= \L^{m,k}$ this is given by the equaliser of a diagram
\[
  \prod_{\substack{0\le i \le m\\i \ne k}} X_{m-1} \implies    \prod_{\substack{0\le i<j \le m\\i,j \ne k}} X_{m-2}.     
\]

\begin{definition}\label{npdef}
Define an Artin $n$-hypergroupoid  to be a simplicial affine scheme $X_{\bt}$, such that the    partial matching maps
$$
X_m= \Hom_{s\Set}(\Delta^m, X) \to \Hom_{s\Set}(\L^{m,k}, X)
$$
are   smooth surjections   for all $k,m$, and isomorphisms  for all $m>n$ and all $k$.
\end{definition}

The idea of using such objects to model higher stacks is apparently originally due to Grothendieck, buried somewhere in \cite{pursuingstacks}.

\begin{remark}
Note that hypergroupoids can be defined in any category containing pullbacks along covering morphisms. In \cite{zhu},
this is used
to define Lie $n$-groupoids (taking the category of manifolds, with coverings given by submersions). A similar approach could be used to define higher topological stacks (generalising \cite{Noohi1}), taking surjective local fibrations as the coverings in the category of topological spaces. Similar constructions could be made in  non-commutative geometry and in synthetic differential geometry. 
\end{remark}

Fix a base ring $R$. Given any Artin $n$-hypergroupoid $X$ over  $R$, there is an associated functor $X\co \Alg_R \to s\Set$, given by $X(A)_n:= X_n(A)$. The following is \cite[Theorem \ref{stacks-relstrict}]{stacks2}:

\begin{theorem}\label{strict}
If $X$ is an Artin $n$-hypergroupoid $X$ over $R$, then its hypersheafification $X^{\sharp}\co \Alg_R \to s\Set$ is an $n$-geometric Artin stack in the sense of \cite[Definition 1.3.3.1]{hag2}. Every $n$-geometric Artin stack arises in this way. 
\end{theorem}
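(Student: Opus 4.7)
The plan is to argue by induction on $n$ in both directions. The base case $n=0$ reduces, using Properties \ref{hgpdprops}, to the observation that an Artin $0$-hypergroupoid $X$ satisfies $X = \cosk_0 X_0$ (since all matching maps above level zero are isomorphisms), so $X^\sharp$ is the affine scheme $X_0$, which is a $0$-geometric Artin stack; conversely every affine scheme is trivially of this form.

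For the forward direction in the inductive step, given an Artin $n$-hypergroupoid $X$, I would verify the two defining properties of $n$-geometricity for $X^\sharp$: (i) the map $X_0 \to X^\sharp$ is a smooth atlas, and (ii) the diagonal $X^\sharp \to X^\sharp \times X^\sharp$ is $(n-1)$-representable. For (i), epimorphicity in the hypersheaf topology comes from Kan fibrancy together with smooth surjectivity of the matching maps (so local sections of $X^\sharp$ lift through some $X_m$ and then descend to $X_0$), while smoothness of $X_0 \to X^\sharp$ reduces to smoothness of the matching map $X_1 \to X_0$ at $m=1, k=0$. For (ii), I would construct a path-space Artin $(n-1)$-hypergroupoid $P(X)$ via a d\'ecalage-type construction --- $P(X)_m$ built from $X_{m+1}$ with structure maps shifted by one, arranged so that $P(X)^\sharp \simeq X_0 \times_{X^\sharp} X_0$. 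The matching conditions for $P(X)$ at level $m$ are then precisely those of $X$ at level $m+1$, so $P(X)$ is indeed an Artin $(n-1)$-hypergroupoid, and the inductive hypothesis supplies $(n-1)$-geometricity of the diagonal.

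For the converse, let $\mathfrak{X}$ be an $n$-geometric Artin stack with smooth atlas $p \colon U \to \mathfrak{X}$, $U$ affine. The \v{C}ech nerve $\check{U}_\bullet := \cosk_0(U/\mathfrak{X})$ has terms $\check{U}_m$ which are $(n-1)$-geometric Artin stacks by definition. By the inductive hypothesis each $\check{U}_m$ admits an Artin $(n-1)$-hypergroupoid resolution $Y^{(m)}_\bullet$, and with care these can be chosen functorially in $m$ so as to assemble into a bisimplicial affine scheme $Y^{(\bullet)}_\bullet$; its diagonal $X_n := Y^{(n)}_n$ should then be the desired Artin $n$-hypergroupoid, with $X^\sharp \simeq \mathfrak{X}$ verified by descent along $U \to \mathfrak{X}$.

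The hard part will be the coherence issue in the converse: to obtain an honest bisimplicial object rather than a pseudo-simplicial one, the resolutions $Y^{(m)}$ must be chosen so that face and degeneracy maps of $\check{U}$ lift strictly. My approach would be to build the $Y^{(m)}$ iteratively from canonical atlases defined by a functorial construction on $(n-1)$-geometric stacks, and then verify Definition \ref{npdef} on the diagonal: smoothness of matching maps at levels $\leq n$ should follow from smoothness of the atlases at each stage, while the isomorphism condition above level $n$ follows from the inductive $(n-1)$-hypergroupoid structure of the $Y^{(m)}$ combined with the dimensional drop across the diagonal.
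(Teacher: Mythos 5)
The paper does not actually prove Theorem \ref{strict} here; it imports it from \cite{stacks2}, so I am comparing your sketch with the proof given there. Your forward direction is essentially that argument: induction on $n$, with $X_0 \to X^{\sharp}$ the atlas and a d\'ecalage-type path object (in \cite{stacks2} built from $\mathrm{Dec}(X)$ with $\mathrm{Dec}(X)_m = X_{m+1}$) presenting $X_0\by^h_{X^{\sharp}}X_0$ as the hypersheafification of an Artin $(n-1)$-hypergroupoid. Two points need to be made explicit: $(n-1)$-representability of the diagonal concerns $U\by^h_{X^{\sharp}}V$ for \emph{arbitrary} affines $U,V \to X^{\sharp}$, so you need the standard bootstrapping step reducing this to $X_0\by^h_{X^{\sharp}}X_0$ via the smooth epimorphism $X_0 \to X^{\sharp}$; and epimorphicity of $X_0 \to X^{\sharp}$ needs the explicit description of $X^{\sharp}(A)$ as in Definition \ref{hyperdef}, not just Kan fibrancy of $X(A)$. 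Also, a $0$-hypergroupoid is the \emph{constant} simplicial object on $X_0$, not $\cosk_0X_0$ (whose level $m$ is $X_0^{m+1}$); the conclusion of your base case is still correct.

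The genuine gap is in the converse. The coherence problem you flag --- strictifying the levelwise resolutions $Y^{(m)}$ of the \v{C}ech nerve into an honest bisimplicial affine scheme --- is not resolved by appealing to ``canonical atlases defined by a functorial construction'': no such functorial affine atlas of an $(n-1)$-geometric stack exists (an atlas is a choice, and making the choices strictly compatible with all the face and degeneracy maps of $\check{U}_{\bt}$ is exactly the problem you set out to solve). Even granting the bisimplicial object, the claim that its diagonal satisfies Definition \ref{npdef} is itself a substantial statement of Artin--Mazur/codiagonal type, with smoothness and surjectivity of the horn fillers to be tracked, which you assert rather than prove. The proof in \cite{stacks2} avoids both issues by working in a single simplicial direction, Reedy-style: take $X_0 \to \fX$ an affine atlas, and inductively choose $X_m$ to be an affine atlas of the relative matching object $\Hom_{s\Set}(\pd\Delta^m,X)\by^h_{\oR\HHom(\pd\Delta^m,\fX)}\fX$, which is geometric of strictly smaller degree by induction on $m$ together with geometricity of the diagonal of $\fX$. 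This produces a trivial relative hypergroupoid $X \to \fX$ directly, with no strictification required, and the partial matching maps of $X$ are then shown to be smooth surjections (isomorphisms above level $n$) by factoring them through these matching objects. I would replace your bisimplicial construction with this one.
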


For set-valued functors, hypersheafification is just ordinary sheafification. For groupoid-valued functors, hypersheafification is stackification. An analogous definition can be made for $s\Set$-valued functors, but in the next section we will give an explicit description. 

\begin{remark}\label{cflurie}
Beware that
there are slight differences in terminology between \cite{hag2} and \cite{lurie}. In the former, only  affine schemes are $0$-representable, so arbitrary schemes might only be $2$-geometric, while Artin stacks are $1$-geometric stacks if and only if they have affine diagonal. In the latter, algebraic spaces are $0$-stacks.  

An $n$-stack $\fX$ in the sense of \cite{lurie} is called $n$-truncated in \cite{hag2}, and can be characterised by the property that that 
$
 \pi_i(\fX(A))=0
$
for all $i>n$ and $A \in \Alg_R$. For a hypergroupoid $X$, this amounts to saying that the associated stack $X^{\sharp}$ is $n$-truncated if and only if the maps
\[
X_i= \Hom_{s\Set}(\Delta^i,X) \to  \Hom_{s\Set}(\pd\Delta^i,X)
\]
are monomorphisms (i.e. immersions) of affine schemes for all $i>n$.

It follows easily from Property \ref{hgpdprops}.\ref{truncate} that every $n$-geometric stack in \cite{hag2} is $n$-truncated;
conversely,  any  $n$-truncated stack $\fX$ is $(n+2)$-geometric. Any Artin stack with affine diagonal (in particular any separated algebraic space) is $1$-geometric.

If we used algebraic spaces instead of  affine schemes in Definition \ref{npdef}, then Theorem \ref{strict} would adapt to give a characterisation of $n$-truncated Artin stacks. Our main motivation for using affine schemes as the basic objects is that they will be easier to translate to a derived setting.
\end{remark}

\section{Morphisms and equivalences}

Theorem \ref{strict} is all very well, but is clearly not the whole story. For a start, it gives us no idea of how to construct the hypersheafification. Thus we have no way of understanding morphisms between $n$-geometric stacks (as the hypersheafification is clearly not full), or even of knowing when two hypergroupoids will give us equivalent $n$-geometric stacks. If we think of the hypergroupoid as  analogous to the atlas of a manifold, then we need a notion similar to refinement of an open cover.

\subsection{Trivial relative hypergroupoids}

\begin{definition}\label{ptrel}
Say that a morphism    $f\co X\to Y$ of simplicial affine schemes is a    trivial  relative Artin (resp. Deligne--Mumford) $n$-hypergroupoid if the  
relative  matching maps
$$
X_m \to \Hom_{s\Set}(\pd \Delta^m,X)\by_{\Hom_{s\Set}(\pd \Delta^m,Y)}Y_m 
$$
are smooth (resp. \'etale) surjections for all $m\ge 0$, and isomorphisms for all $m\ge n$. 
\end{definition}

An example of a trivial  relative Artin $1$-hypergroupoid in stacks is the \v Cech nerve $\check{Y} \to \fY$ constructed in the introduction.

\begin{property}\label{ttruncate}
\cite[Lemma \ref{stacks-ttruncate}]{stacks2}:
 Trivial  relative $n$-hypergroupoids are completely determined by their truncations in levels $<n$. 
Explicitly, a morphism $f:X\to Y$ is a trivial  relative Artin (resp. Deligne--Mumford) $n$-hypergroupoid if and only if $X= Y\by_{\cosk_{n-1}Y}\cosk_{n-1}X$, and the $(n-1)$-truncated morphism $X_{<n}\to Y_{<n}$ satisfies the conditions of Definition \ref{ptrel} (up to level $n-1$). 
\end{property}

\subsection{Sheafification and morphisms}\label{sheaf}

\begin{theorem}\label{bigthm}
 The homotopy category of  strongly quasi-compact $n$-geometric  Artin  stacks is given by taking the full subcategory of $s\Aff$ consisting of  Artin  $n$-hypergroupoids, and formally inverting the trivial relative   Artin (resp. Deligne--Mumford) $n$-hypergroupoids.

In fact, a model for the $\infty$-category of strongly quasi-compact $n$-geometric   Artin  stacks is given by the relative category consisting of  Artin  $n$-hypergroupoids and the class of trivial relative Artin $n$-hypergroupoids.

The same results hold true if we substitute ``Artin'' with ``Deligne--Mumford'' throughout.
\end{theorem}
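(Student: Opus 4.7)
The plan is to prove the theorem in two stages: first establishing the equivalence of homotopy categories, then upgrading to the $\infty$-categorical statement. The essential surjectivity on objects is already provided by Theorem \ref{strict}, so the work lies in understanding morphisms and verifying that trivial relative hypergroupoids constitute the correct class of maps to invert.

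Stage one: equivalence of homotopy categories. First I would verify that hypersheafification sends trivial relative Artin $n$-hypergroupoids $f \co X \to Y$ to weak equivalences of $n$-geometric stacks. The argument combines two observations: the smooth-surjection hypothesis on relative matching maps makes $f$ behave as a smooth hypercover of sheaves of simplicial sets, while the isomorphism condition for $m \ge n$ controls the truncated homotopy via Remark \ref{cflurie} and Property \ref{hgpdprops}. Using Property \ref{ttruncate} this check reduces to a finite number of levels and is handled by induction on $m$. Next, for fullness I would construct, from any morphism $\phi \co X^{\sharp} \to Y^{\sharp}$ of $n$-geometric stacks, an Artin $n$-hypergroupoid $Z$ fitting into a diagram $X \xleftarrow{p} Z \xrightarrow{q} Y$ of simplicial affine schemes, with $p$ a trivial relative Artin $n$-hypergroupoid and $q^{\sharp} = \phi \circ p^{\sharp}$. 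The natural candidate for $Z_m$ is an affine scheme representing pairs consisting of a map $\Delta^m \to X$ together with a compatible lift $\Delta^m \to Y$ through $\phi$; representability requires the strong quasi-compactness hypothesis, and the hypergroupoid conditions on $Z$ and the triviality of $p$ must be checked using the matching-map conditions on $X$ and $Y$. Faithfulness is then an analogous argument applied to homotopies between morphisms, realized by a third hypergroupoid.

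Stage two: $\infty$-categorical enhancement. I would exhibit $(\mathcal{H}_n, \mathcal{W}_n)$, the relative category of Artin $n$-hypergroupoids with trivial relative Artin $n$-hypergroupoids, as essentially a category of fibrant objects. Concretely, Artin $n$-hypergroupoids should arise as the fibrant objects in a right Bousfield localization of an ambient model or relative category on simplicial affine schemes equipped with smooth covers, with the trivial relative $n$-hypergroupoids as the weak equivalences between fibrants. Mapping spaces can then be computed via $\Hom(X \otimes \Delta^{\bullet}, Y)$ after fibrant replacement, and compared with mapping spaces in the $\infty$-category of $n$-geometric stacks by means of the descent characterization underlying $\oR\HHom$. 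Remark \ref{cflurie}'s identification of truncation levels ensures no information is lost between the two sides.

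The main obstacle is the fullness construction in stage one. Starting from a morphism between hypersheafifications, which is an intrinsically derived datum, one must extract a concrete zigzag of simplicial affine schemes, and the intermediate object $Z$ must both be representable and satisfy the smoothness conditions in Definition \ref{npdef}. Smoothness of $Z$'s matching maps couples the smoothness of the matching maps of $X$ with the merely sheaf-theoretic data of maps into $Y^{\sharp}$; untangling this requires the truncation statement in Property \ref{ttruncate} to reduce to a finite range of levels, followed by smooth descent applied degree by degree. This is where essentially all the technical content of the theorem is concentrated, and is the step that requires the detailed lemmas in \cite{stacks2}.
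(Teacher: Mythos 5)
Your overall skeleton (objects from Theorem \ref{strict}, triviality of $\cT_n$ from Lemma \ref{sheaftriv}, then morphisms) is in the right spirit, but both of your technical stages diverge from the paper and each contains a genuine gap. In stage one, your candidate $Z_m$ ``representing pairs of a map $\Delta^m \to X$ together with a compatible lift through $\phi$'' is not representable by an affine scheme: $\phi$ is only defined on hypersheafifications, so ``compatibility'' is homotopy-coherent, sheaf-local data, and the resulting functor is itself a stack rather than an affine scheme (strong quasi-compactness does not help here). This is precisely why the paper works with a \emph{weakly initial filtered inverse system} of trivial relative Deligne--Mumford $n$-hypergroupoids $\tilde{X}\to X$ and takes $\LLim \HHom_{s\Aff}(\tilde{X},Y)$ --- no single cover suffices for all morphisms. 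The paper does not reprove this; it quotes it as Theorem \ref{duskinmor}, and the entire proof of Theorem \ref{bigthm} reduces to checking that $\HHom^{\sharp}_{s\Aff}$ is the \emph{universal} bifunctor under $\Hom_{s\Aff}$ sending $\cT_n$ to weak equivalences.

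In stage two, your proposal to realise $(\cH_n,\cT_n)$ as a category of fibrant objects with $\cT_n$ as the weak equivalences cannot work as stated: trivial relative $n$-hypergroupoids are the analogue of trivial fibrations (hypercovers), and this class does not satisfy two-out-of-three, so it cannot be the weak-equivalence class of a fibration category or of a Bousfield localisation in the way you describe. The paper avoids any model/fibration structure: the $\infty$-categorical statement follows from the universality argument above, whose one genuinely non-formal observation --- missing from your proposal --- is that for an Artin $n$-hypergroupoid $Y$ the map $Y \to Y^{\Delta^n}$ is a section of a morphism in $\cT_n$. Combined with the fact that the system $\tilde{X}\to X$ lies in $\cT_n$, this shows the simplicial enrichment $\HHom_{s\Aff}(\tilde X,Y)_n=\Hom_{s\Aff}(\tilde X,Y^{\Delta^n})$ is already generated by $\Hom_{s\Aff}$ and $\cT_n$, so the relative category localises to the correct mapping spaces. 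I would recommend restructuring your argument around Theorem \ref{duskinmor} and this section observation rather than attempting a direct fullness/faithfulness construction.
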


The relative categories of \cite{BarwickKanEquiv} give the cleanest description of the $\infty$-category  (as suggested to the author by the referee of \cite{stacks2}).  Before proving Theorem \ref{bigthm}, we will  give a more explicit description of the $\infty$-category.   

\begin{lemma}\label{sheaftriv}
If $f\co X\to Y$ is a    trivial  relative Artin $n$-hypergroupoid, then for all rings $A$, the map
$
 X^{\sharp}(A) \to Y^{\sharp}(A)       
$
is a weak equivalence in $s\Set$.
\end{lemma}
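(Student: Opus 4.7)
The plan is to show that $f\co X \to Y$ is a \emph{smooth-local trivial Kan fibration} between the simplicial presheaves $X, Y\co \Alg_R \to s\Set$, and then invoke the fact that hypersheafification turns such maps into levelwise weak equivalences. The lemma is essentially the assertion that trivial relative $n$-hypergroupoids are the smooth-topology analogue of trivial fibrations of simplicial sets, and hypersheafification is designed precisely to invert them.

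First I would fix a ring $A$ and unwind the definition of a trivial relative Artin $n$-hypergroupoid at the level of $A$-points. A lifting problem
\[
\xymatrix{\pd\Delta^m \ar[r] \ar[d] & X(A) \ar[d] \\ \Delta^m \ar[r] \ar@{.>}[ur] & Y(A)}
\]
corresponds exactly to an $A$-point of the fibre product $\Hom_{s\Set}(\pd\Delta^m, X)\by_{\Hom_{s\Set}(\pd\Delta^m, Y)}Y_m$; a lift to $X(A)$ is then an $A$-point of $X_m$ mapping to it. By Definition \ref{ptrel} the matching map is a smooth surjection, so there exists a smooth cover $A \to B$ over which the lift exists in $X_m(B)$. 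For $m \ge n$ the matching map is an isomorphism, so the lift exists and is unique on the nose. This is exactly the statement that $f$ is a smooth-local trivial Kan fibration, and in particular a hypercover in the smooth topology.

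Next, I would invoke the universal property of hypersheafification (as developed in \cite{stacks2} and alluded to in \S\ref{sheaf}): the functor $(-)^{\sharp}$ sends smooth hypercovers to levelwise weak equivalences. Concretely, one can present $(-)^{\sharp}$ as an iterated \v Cech/hypercover construction, and then the local lifting produced above propagates to an honest lifting after sheafification, because the smooth cover $A \to B$ is collapsed to an equivalence in $X^{\sharp}$ and $Y^{\sharp}$. An equivalent phrasing: in the smooth-topology local model structure on simplicial presheaves, $f$ is a trivial fibration, hence $X^{\sharp}(A) \to Y^{\sharp}(A)$ is a weak equivalence of fibrant objects in $s\Set$ for every $A$.

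The main obstacle is the second step: one must verify carefully that $(-)^{\sharp}$ sends the smooth-local trivial Kan fibrations of the first step to levelwise weak equivalences. If one takes the model-categorical definition of hypersheafification as fibrant replacement in the local injective model structure, this is immediate, because a smooth-local trivial fibration is by construction a weak equivalence in this structure and fibrant replacement preserves weak equivalences. If instead one works with the explicit iterated-hypercover construction, one needs to check compatibility with the matching-map formulation used in Definition \ref{ptrel}; this is essentially a bookkeeping exercise, most cleanly handled by induction on $n$ using Property \ref{ttruncate} to reduce the general case to the $(n-1)$-truncated case together with the fibre product $X = Y\by_{\cosk_{n-1}Y}\cosk_{n-1}X$.
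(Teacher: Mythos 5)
Your proposal is correct and follows essentially the route taken in \cite{stacks2} (the present summary omits the proof): a trivial relative Artin $n$-hypergroupoid is exactly a relative hypercover, i.e.\ a local trivial fibration of simplicial presheaves for the smooth topology, and hypersheafification --- fibrant replacement in the hypercomplete local model structure --- sends such maps to objectwise weak equivalences. The one point to phrase more carefully is that a local trivial fibration is a local weak equivalence by a theorem (Jardine, or Dugger--Hollander--Isaksen for hypercovers) rather than ``by construction'', since the trivial fibrations of the injective local model structure are the maps with the right lifting property against all monomorphisms, not the locally lifting ones.
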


\begin{definition}
Define the simplicial $\Hom$ functor on    simplicial affine schemes by letting $\HHom_{s\Aff}(X,Y)$ be the simplicial set given by 
\[
\HHom_{s\Aff}(X,Y)_n:= \Hom_{s\Aff}(\Delta^n \by X, Y).        
\]
\end{definition}

\begin{definition}\label{hyperdef}
Given an  Artin $n$-hypergroupoid $Y$ and $X \in s\Aff$, define 
\[
       \HHom_{s\Aff}^{\sharp}(X,Y):= \LLim  \HHom_{s\Aff}(\tilde{X}, Y),
\]
where $\tilde{X} \to X$ runs over any  weakly initial filtered inverse system of  trivial  relative Artin $n$-hypergroupoids. [In fact, \cite[Corollary \ref{stacks-procofibrantet}]{stacks2} shows that there is a suitable inverse system $\tilde{X} \to X$ of   trivial  relative Deligne--Mumford $n$-hypergroupoids.] 
\end{definition}

The following is \cite[Corollary \ref{stacks-duskinmor}]{stacks2}:
\begin{theorem}\label{duskinmor}
If $X \in s\Aff$ and $Y$ is an  Artin $n$-hypergroupoid, then the derived $\Hom$ functor on  hypersheaves is given (up to weak equivalence)   by
\[
 \oR \HHom(X^{\sharp}, Y^{\sharp}) \simeq  \HHom_{s\Aff}^{\sharp}(X,Y).   
\]      
\end{theorem}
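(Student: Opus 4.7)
The plan is to use Theorem \ref{bigthm} to identify $\oR\HHom(X^{\sharp}, Y^{\sharp})$ with the derived mapping space computed in the relative category whose objects are Artin $n$-hypergroupoids and whose weak equivalences are the trivial relative $n$-hypergroupoids. Under that identification, the filtered system $\{\tilde{X}_\alpha \to X\}$ of Definition \ref{hyperdef} plays the role of a ``cofibrant-like'' resolution of $X$, and the formula $\LLim \HHom_{s\Aff}(\tilde{X}_\alpha, Y)$ should recover the derived mapping space by mapping directly out of these resolutions into the hypergroupoid target $Y$.

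The first step is to verify that each $\HHom_{s\Aff}(\tilde{X}_\alpha, Y)$ is a Kan complex, so that filtered colimits behave correctly. By the adjunction between $\Delta^n \times (-)$ and $\HHom_{s\Aff}(\Delta^n, -)$, the horn-filling problem reduces to extending a morphism $\L^{m,k} \times \tilde{X}_\alpha \to Y$ over $\Delta^m \times \tilde{X}_\alpha$; such extensions can be produced from the smooth surjectivity of the partial matching maps of $Y$ (passing, if necessary, to a refinement $\tilde{X}_\beta$ in the inverse system to absorb a smooth cover, which is harmless after taking the colimit). The second and principal step is to show that every connecting map
\[
\HHom_{s\Aff}(\tilde{X}_\alpha, Y) \to \HHom_{s\Aff}(\tilde{X}_\beta, Y)
\]
in the filtered system is a weak equivalence. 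Here one exploits the trivial relative $n$-hypergroupoid structure of $\tilde{X}_\beta \to \tilde{X}_\alpha$: Property \ref{ttruncate} cuts the problem down to finitely many levels, after which the obstruction to lifting a simplex (and a homotopy between simplices) is resolved by combining the isomorphism of relative matching maps in degrees $\geq n$ with the hypergroupoid conditions on $Y$ in the remaining lower degrees.

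Given these two facts, the fact that filtered colimits preserve Kan complexes and weak equivalences between them yields that $\LLim \HHom_{s\Aff}(\tilde{X}_\alpha, Y)$ agrees, up to weak equivalence, with any individual $\HHom_{s\Aff}(\tilde{X}_\alpha, Y)$. By Lemma \ref{sheaftriv}, each $\tilde{X}_\alpha^{\sharp}$ is weakly equivalent to $X^{\sharp}$, so under the relative-category description of Theorem \ref{bigthm} this mapping space realises $\oR\HHom(X^{\sharp}, Y^{\sharp})$, which is the desired identification. The principal obstacle is the second step above: extracting a weak equivalence from the asymmetric ``smooth surjection / isomorphism'' dichotomy built into trivial relative hypergroupoids, in particular reconciling the local nature of smooth lifts with the strict lifting demanded by fibrancy of the mapping space. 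I would expect to handle it by a double induction, outer on the simplicial degree (using Property \ref{ttruncate} to make it finite) and inner on a skeletal presentation of the $A$-points of the relevant affine schemes, at each stage extracting a representative of a homotopy class after a faithfully flat base change that disappears upon hypersheafification.
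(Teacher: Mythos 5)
The paper itself offers no proof of this statement --- it is imported wholesale from \cite[Corollary \ref{stacks-duskinmor}]{stacks2} --- so the only question is whether your argument stands on its own. It does not: the central claim of your second step, that every connecting map $\HHom_{s\Aff}(\tilde{X}_\alpha,Y)\to\HHom_{s\Aff}(\tilde{X}_\beta,Y)$ in the filtered system is a weak equivalence, is false, and if it were true it would make the entire colimit in Definition \ref{hyperdef} pointless: the conclusion you draw from it, that $\HHom^{\sharp}_{s\Aff}(X,Y)$ is weakly equivalent to any single term, would give $\oR\HHom(X^{\sharp},Y^{\sharp})\simeq\HHom_{s\Aff}(X,Y)$, i.e.\ that every morphism of stacks is represented by a strict simplicial morphism out of $X$ itself. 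A concrete counterexample: take $n=1$, $Y=B\bG_m$ and $X=\Spec A$ constant. Then $\pi_0\HHom_{s\Aff}(X,Y)$ is a point, while $\pi_0\oR\HHom(X^{\sharp},Y^{\sharp})\cong\mathrm{Pic}(A)$. The terms indexed by \v Cech nerves of smooth (or \'etale) covers $U\to\Spec A$ have $\pi_0$ equal to the set of line bundles trivialised on $U$, and the transition maps to finer covers genuinely enlarge this set; they are injective but not surjective on $\pi_0$, hence not weak equivalences. The raison d'\^etre of the weakly initial system is precisely that new homotopy classes of maps appear as $X$ is refined, so the difficulty you flag at the end (``extracting a weak equivalence from the smooth surjection / isomorphism dichotomy'') is not a technical obstacle to be overcome by a cleverer induction --- the statement you are trying to prove there is wrong.

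There is also a circularity in your first step. In this paper Theorem \ref{bigthm} is proved \emph{after} Theorem \ref{duskinmor} and depends on it: the identification of the localisation of $n$-hypergroupoids at $\cT_n$ with the homotopy category of $n$-geometric stacks needs Theorem \ref{duskinmor} (together with Theorem \ref{strict} and Lemma \ref{sheaftriv}) to match up the morphisms. So you cannot invoke Theorem \ref{bigthm} as an input here. Even granting it, the theorem only presents the $\infty$-category abstractly as a relative category; showing that the specific formula $\LLim\HHom_{s\Aff}(\tilde{X},Y)$ computes its mapping spaces --- i.e.\ that the system $\tilde{X}\to X$ behaves as a resolution for which mapping strictly into the hypergroupoid $Y$ suffices --- is essentially the whole content of Theorem \ref{duskinmor}, and rests on $Y$ satisfying descent for the relevant covers (the role played by the smooth surjectivity of its partial matching maps), not on the transition maps being equivalences. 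Your observation that the individual terms need not be Kan, and that horns are only filled after passing to a refinement, already shows that refining changes the homotopy type of the terms; only the colimit has the correct one.
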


\begin{remarks}\label{higherrks}
 Given a ring $A$, set $X=\Spec A$, and note that $\HHom_{s\Aff}^{\sharp}(X,Y)= Y^{\sharp}(A)$, the hypersheafification of the functor $Y: \Alg_R \to s\Set$, so we can take Definition \ref{hyperdef} as a definition of sheafification for Artin hypergroupoids, giving an explicit description of $Y^{\sharp}$.
 The $n=1$ case should be familiar as the standard definition of sheafification.      

Between them,  Theorems \ref{strict} and \ref{duskinmor} recover the simplicial category $\cA_n$ of strongly quasi-compact $n$-geometric Artin stacks, with Theorem \ref{strict}
giving the objects and Theorem \ref{duskinmor} the morphisms. We could thus take those theorems to be a definition of that simplicial category. 

Moreover, \cite[Remark \ref{stacks-holimrk}]{stacks2} shows that 
\begin{align*}
 \pi_0\HHom_{s\Aff}^{\sharp}(X,Y)&\simeq \LLim_{X'}\pi_0\HHom_{s\Aff}(X',Y),\\
\pi_n(\HHom_{s\Aff}^{\sharp}(X,Y),f) &\simeq \LLim_{X' }\pi_n(\HHom_{s\Aff}(X',Y) ,f),
\end{align*}
where the limit is taken over the homotopy category of all trivial relative Deligne--Mumford $n$-hypergroupoids $X' \to X$.

Even for classical Artin or Deligne--Mumford stacks, this gives a shorter (and arguably more satisfactory) definition, since classical algebraic stacks  are just $1$-truncated (see Remark \ref{cflurie}) geometric stacks. For semi-separated algebraic spaces or schemes ($0$-truncated \'etale and Zariski $1$-hypergroupoids, respectively), this definition is at least comparable in complexity to the classical one. Note that making use of Properties \ref{hgpdprops}.\ref{truncate} and \ref{ttruncate} allows us  characterise all of these concepts in terms of finite diagrams of affine schemes.
\end{remarks}

\begin{proof}[Proof of Theorem \ref{bigthm}]
We just need to show that the functor $\HHom_{s\Aff}^{\sharp}$ on pairs of Artin (resp. Deligne--Mumford) $n$-hypergroupoids is the right-derived functor of $\Hom_{s\Aff}$ with respect to the class $\cT_n$ of trivial relative Artin (resp. Deligne--Mumford) $n$-hypergroupoids. In other words, we need to show that $\HHom_{s\Aff}^{\sharp}$ is the universal bifunctor under $\Hom_{s\Aff}$ mapping $\cT_n$ to isomorphisms  in $\Ho(s\Set)$. 

Lemma \ref{sheaftriv} ensures that $\HHom_{s\Aff}^{\sharp}$ sends $\cT_n$ to weak equivalences, so we need only prove universality. For this, we just observe that if $Y$ is an Artin (resp. Deligne--Mumford) $n$-hypergroupoid, then the map $Y \to Y^{\Delta^n}$ is the section of a morphism in $\cT_n$.  Since $\tilde{X} \to X$ is a system in $\cT_n$, this means that $\HHom_{s\Aff}^{\sharp}$ is constructed from $\Hom_{s\Aff} $ and $\cT_n$, 
so any bifunctor under $\Hom_{s\Aff}$ must lie under $\HHom_{s\Aff}^{\sharp} $ if it sends $\cT_n$ to weak equivalences.
\end{proof}

\subsection{Eilenberg--Mac Lane spaces and cohomology}

Given any abelian group $n$, the Dold--Kan correspondence allows us to form a simplicial abelian group $K(A,n)$ given by denormalising the chain complex $A[-n]$. Explicitly, $K(A,n)$ is freely generated under degeneracy operations $\sigma_i$ by a copy of $A$ in level $n$, so  $K(A,n)_m \cong A^{\binom{m}{n}}$ (see \cite[8.4.4]{W} for details).

Given a smooth commutative affine group scheme $A$, this construction gives us an Artin $n$-hypergroupoid $K(A,n)$ (in fact, if $n=1$, $A$ need not be commutative), and then for any Artin stack $\fX$,
\[
\H^{n-i}_{\et}(\fX, A) \cong   \pi_i\oR \HHom(X^{\sharp}, K(A,n)^{\sharp}).
\]
In particular, taking $A= \bG_a$ gives us $\H^*(\fX, \O_{\fX})$, while taking $A = \Z/\ell^m$ (regarded as a finite scheme) gives $\H^*_{\et}(\fX,\Z/\ell^m )$. 

We could generalise this by allowing $A$ to be a smooth commutative algebraic group space, in which case  $K(A,n)$ would be an Artin $n$-hypergroupoid in algebraic spaces, making $K(A,n)^{\sharp}$ an $n$-truncated geometric stack (see Remark \ref{cflurie}), and hence representable by an Artin $(n+2)$-hypergroupoid (in affine schemes). 

\section{Quasi-coherent sheaves}\label{qcohsn}

The following is \cite[Corollary \ref{stacks-qcohequiv}]{stacks2}:
\begin{proposition}
For an Artin $n$-hypergroupoid $X$, giving a quasi-coherent module on the $n$-geometric stack $X^{\sharp}$ is equivalent to giving
\begin{enumerate}
 \item a quasi-coherent sheaf $\sF^n$ on $X_n$ for each $n$, and
\item isomorphisms $\pd^i\co \pd_i^*\sF^{n-1} \to \sF^n$ for all $i$ and $n$, satisfying the usual cosimplicial identities. 
\end{enumerate}
\end{proposition}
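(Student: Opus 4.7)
The plan is to realise both sides as descent data along the smooth atlas $X_0 \to X^\sharp$. The data $(\sF^n, \partial^i)$ described in (1)--(2) is exactly the category $\mathrm{QC}^{\cart}(X_\bullet)$ of \emph{cartesian} quasi-coherent modules on the simplicial affine scheme $X_\bullet$ (note that cosimplicial identities involving degeneracies are automatic once the face isomorphisms $\partial^i$ are imposed, since degeneracies are sections of face maps). The natural candidate functor sends $\sG \in \mathrm{QC}(X^\sharp)$ to the collection of restrictions $\sF^n := \sG|_{X_n}$, equipped with the tautological comparison isomorphisms $\partial^i \co \partial_i^* \sF^{n-1} \xrightarrow{\sim} \sF^n$ coming from the fact that $\partial_i \co X_n \to X_{n-1}$ is a morphism over $X^\sharp$.

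To show this functor is an equivalence I would proceed in two steps. First, by Theorem \ref{strict}, $X^\sharp$ is an $n$-geometric Artin stack and $X_0 \to X^\sharp$ is a smooth cover; its \v{C}ech nerve $\check{C}_m := \underbrace{X_0 \by_{X^\sharp} \cdots \by_{X^\sharp} X_0}_{m+1}$ satisfies smooth (or fpqc) descent for quasi-coherent sheaves on the geometric stack $X^\sharp$, giving $\mathrm{QC}(X^\sharp) \simeq \mathrm{QC}^{\cart}(\check{C}_\bullet)$. Second, I would compare $\check{C}_\bullet$ with $X_\bullet$ via the canonical augmentation map $X_\bullet \to \check{C}_\bullet$. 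Because the partial matching maps of an Artin $n$-hypergroupoid are smooth surjections (Definition \ref{npdef}), the total matching maps $X_m \to \check{C}_m$ are smooth surjections in each degree, and by faithfully flat descent this yields $\mathrm{QC}^{\cart}(\check{C}_\bullet) \simeq \mathrm{QC}^{\cart}(X_\bullet)$. Composing the two equivalences proves the proposition.

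The main obstacle is the second step, namely making the hypercover-to-\v{C}ech comparison rigorous in the higher-stack setting. The cleanest route is induction on $n$: the case $n=1$ is Olsson's classical groupoid descent \cite{olssartin}, and for $n>1$ one iterates, using that between affine schemes the map $X_m \to \check{C}_m$ lies over an $(n-1)$-geometric base and is smooth surjective, so one can appeal to the inductive hypothesis together with ordinary flat descent. Properties \ref{hgpdprops}.\ref{truncate} and \ref{ttruncate} allow the whole argument to be reduced to the finite truncation in degrees $\le n+1$, so there is no convergence issue. Crucially, no $\infty$-categorical coherence obstructions intervene: quasi-coherent sheaves form a $1$-category, so ``homotopy-cartesian'' and strict cartesian data coincide, and the cosimplicial identities in (2) encode exactly the cocycle conditions required for descent.
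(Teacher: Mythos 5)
The paper itself gives no proof of this proposition---it is quoted directly from \cite{stacks2}---so your argument can only be judged on its own terms. Your overall architecture is the natural one: identify the data (1)--(2) with cartesian quasi-coherent modules on $X_{\bt}$ and compare with descent data for the atlas $X_0 \to X^{\sharp}$. The first step is fine in outline, though note that for $n\ge 2$ the terms $\check{C}_m$ with $m\ge 1$ are only $(n-1)$-geometric stacks, so even formulating $\mathrm{QC}^{\cart}(\check{C}_{\bt})$ and proving \v{C}ech descent along $X_0\to X^{\sharp}$ already presupposes the inductive machinery you defer to the end.

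The genuine gap is in your second step. The principle you invoke---that a levelwise smooth surjection of simplicial objects induces an equivalence on cartesian modules ``by faithfully flat descent''---is false: the map of constant simplicial schemes from $\Spec k\sqcup \Spec k$ to $\Spec k$ is a levelwise \'etale surjection, but cartesian modules on a constant simplicial scheme are just modules on its value, so the two categories differ. What descent along a map of simplicial objects actually needs is a covering condition on the \emph{relative matching maps} $X_m\to \Hom_{s\Set}(\pd\Delta^m,X)\by_{\Hom_{s\Set}(\pd\Delta^m,\check{C})}\check{C}_m$ (the maps appearing in Definition \ref{ptrel}), not on the levelwise maps: concretely, to descend the comparison isomorphism from $X_1$ to $\check{C}_1=X_0\by_{X^{\sharp}}X_0$ you must check that its two pullbacks to $X_1\by_{\check{C}_1}X_1$ agree, and then transport the cocycle condition to $\check{C}_2$; this is exactly where the horn-filling hypotheses of Definition \ref{npdef} in levels $2$ and $3$ have to be used, and it is not supplied by flat descent applied degreewise. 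Worse, $X\to\check{C}$ is typically \emph{not} a trivial relative hypergroupoid: since $\check{C}=\cosk_0(X_0/X^{\sharp})$, its relative matching map in level $m\ge 2$ is the absolute map $X_m\to\Hom_{s\Set}(\pd\Delta^m,X)$, which is a monomorphism rather than a cover for $m>n$ and need not be surjective below that either (horn-surjectivity does not imply boundary-surjectivity). The correct comparison goes through the tower of relative coskeleta $\cosk_m(X/X^{\sharp})$, whose relative matching maps are smooth surjections because $X\to X^{\sharp}$ is a hypercover---itself a fact that must be extracted from the horn conditions and belongs to the proof of Theorem \ref{strict}, not its statement. In short, the lemma you treat as routine (invariance of cartesian quasi-coherent modules under the hypergroupoid-to-\v{C}ech comparison) is precisely the technical heart of the proposition, and your sketch does not supply it.
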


Given a morphism $f\co X \to Y$ of Artin $n$-hypergroupoids, inverse images are easy to compute: we just have $(f^*\sF)^n:= f_n^*\sF^n$. Direct images are much harder to define, as taking $f_*$ levelwise destroys the Cartesian property. See \cite[\S \ref{stacks-directsn}]{stacks2} for an explicit description of the derived direct image functor $\oR f_*^{\cart}$.

\section{Derived stacks}

Motivated by the need for good obstruction theory and cotangent complexes, derived algebraic geometry replaces rings with simplicial rings. There is a normalisation functor $N\co s\Alg_R \to dg_+\Alg_R$ from simplicial $R$-algebras to commutative chain $R$-algebras in non-negative degrees. If $R$ is a $\Q$-algebra, then $N$ induces an equivalence on the homotopy categories (and also on the derived $\Hom$-spaces). [When $R$ is not a $\Q$-algebra, $dg_+\Alg_R$ is not even a model category.]

We will write $d\Alg$ for either of the categories $s\Alg_R, dg_+\Alg_R$, and write $d\Aff$ for the opposite category (derived affine schemes over $R$), denoting objects as $\Spec A$.
 Any derived ring $A \in d\Alg$ can be thought of as essentially an exotic nilpotent thickening of $\H_0A$, so there are equivalent variants of this theory replacing $A$ with its localisation (\cite[\S \ref{stacks-locthick}]{stacks2}),  its henselisation (\cite[\S \ref{stacks-henthick}]{stacks2}), or even (in Noetherian cases) its completion (\cite[Propositions \ref{stacks-fthm} and \ref{stacks-dgshrink}]{stacks2}) over $\H_0A$.

\begin{remark}
The constructions in this section will work for any model category with a suitable notion of coverings. In particular, they work for symmetric spectra, the basis of a theory known as topological, spectral, brave new or (unfortunately) derived algebraic geometry.
Roughly speaking (as explained in the introduction to \cite{lurieDAG5}), simplicial rings serve to apply homotopy theory to algebraic geometry, while symmetric spectra are used to do the opposite. For a detailed discussion, see \cite[\S 2.6]{lurie}. 
\end{remark}

\subsection{Derived hypergroupoids}

\begin{definition}
Say that a morphism in $s\Alg_R$ is quasi-free if it is freely generated in each level, with the generators closed under the degeneracy operations $\sigma_i$. Say that  a morphism in $dg_+\Alg_R$ is quasi-free if the underlying morphism of skew-commutative graded algebras is freely generated. 

Say that a morphism in $d\Alg$ is a cofibration if it is a retract of a quasi-free map, and that a morphism in $d\Aff$ is a fibration if it is $\Spec $ of a cofibration.
\end{definition}

Write $sd\Aff$ for the category of  simplicial derived affine schemes, i.e.  simplicial diagrams  in $d\Aff$. Weak equivalences in this category are maps $\Spec B \to \Spec A$ inducing isomorphisms $\H_*(B)\cong \H_*(A)$.

\begin{definition}
An object $X_{\bt} \in sd\Aff$ is said to be Reedy fibrant if the matching maps
\[
 X_n= \Hom_{s\Set}(\Delta^n, X) \to \Hom_{s\Set}(\pd \Delta^n, X)       
\]
are fibrations in $d\Aff$ for all $n$. 
\end{definition}

\begin{example}
Given a cofibration $R \to A$ in $s\Alg_R$, write $X= \Spec A$.  We may form an object $A\ten \Delta^n \in  s\Alg_R$ by 
\[
 (A\ten \Delta^n)_i := \overbrace{A_i\ten_RA_i \ten_R\ldots \ten_R A_i}^{\Delta^n_i}.
\]
 Then the simplicial derived affine scheme $\uline{X}$ given by $\uline{X}_n := \Spec ( A\ten \Delta^n)$ is Reedy fibrant, and $X \to \uline{X}$ is a weak equivalence levelwise. 

$\uline{X}$ will be familiar to some readers from the construction of simplicial $\Hom$, since by definition
\[
 \HHom_{s\Alg_R}(A,B) = \uline{X}(B) \in s\Set       
\]
for $A \in s\Alg_R$. More generally, for any $A \in s\Alg_R$, we can always take a quasi-isomorphism $A' \to A$ for $A'$ cofibrant, and then set
\[
 \oR\HHom_{s\Alg_R}(A,B) :=   \HHom(A',B).     
\]

There are analogous constructions in $dg_+\Alg_R$, but they are not so easily described.
 \end{example}

\begin{definition}
We say that a morphism $\Spec B \to \Spec A$ in $d\Aff$ is a smooth (resp. \'etale) surjection if $\Spec \H_0B \to \Spec \H_0A $ is so, and the maps
\[
 \H_i(A)\ten_{\H_0(A)}\H_0(B) \to \H_i(B)       
\]
 are all isomorphisms.
 \end{definition}

\begin{definition}\label{dnpdef}
A  derived Artin $n$-hypergroupoid is  a Reedy fibrant  object $X_{\bt} \in sd\Aff$ for which the  partial matching maps
$$
X_m= \Hom_{s\Set}(\Delta^m, X) \to \Hom_{s\Set}(\L^{m,k} X)
$$
are smooth surjections for all $m,k$ (i.e. all $m \ge 1$ and all $0 \le k \le m$), and weak equivalences for all $m>n$ and all $k$.
\end{definition}

\begin{remark}
Given any  derived Artin $n$-hypergroupoid $X$, we may form a simplicial scheme $\pi^0X$ by setting 
\[
 \pi^0X_n:= \Spec (\H_0 O(X_n)) \in \Aff.
\]
 Then observe that $\pi^0X$ is an  Artin $n$-hypergroupoid, equipped with a map $\pi^0X \to X$. We call this the underived part of $X$.       
\end{remark}

\subsection{Derived Artin stacks}

The following is \cite[ Theorem \ref{stacks-relstrict}]{stacks2}:
\begin{theorem}\label{strictd}
If $X$ is a derived  Artin $n$-hypergroupoid $X$ over $R$, then its hypersheafification $X^{\sharp}\co d\Alg_R \to s\Set$ is an $n$-geometric derived  Artin stack in the sense of \cite[Definition 1.3.3.1]{hag2}. Every strongly quasi-compact $n$-geometric derived Artin stack arises in this way. 
\end{theorem}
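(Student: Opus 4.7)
The strategy is an induction on $n$ that parallels the proof of Theorem \ref{strict}, with the Reedy fibrancy condition in Definition \ref{dnpdef} providing enough rigidity to replace strict pullbacks by homotopy pullbacks throughout. I would work uniformly in $s\Alg_R$ (avoiding $dg_+\Alg_R$ in positive characteristic, where it is not a model category) and apply normalisation only to translate concrete formulae back to the dg setting.

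For the forward direction, let $X$ be a derived Artin $n$-hypergroupoid. The base case $n=0$ is essentially immediate: the partial matching conditions force the canonical map to the constant simplicial object at $X_0$ to be a levelwise weak equivalence, so $X^{\sharp}$ is represented by the derived affine scheme $X_0$, which is $0$-geometric. For the inductive step, I would show that $X_0 \to X^{\sharp}$ is a smooth atlas with $X_0 \by^{h}_{X^{\sharp}} X_0 \simeq X_1$, the smoothness of $(\pd_0,\pd_1)\co X_1 \to X_0 \by X_0$ following from the $\L^{1,k}$ conditions at $m=1$. The key inductive lemma is that the based path object $P_yX$ at any point $y\in X_0$ (or more generally the homotopy fibre of any diagonal map) is a derived Artin $(n-1)$-hypergroupoid, so iterating yields the full tower of $(n-i)$-representable diagonals demanded by \cite[Definition 1.3.3.1]{hag2}.

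For the converse, given a strongly quasi-compact $n$-geometric derived Artin stack $\fX$, pick a smooth surjection $U_0 \to \fX$ with $U_0 \in d\Aff$. The homotopy fibre product $U_0 \by^{h}_{\fX} U_0$ is $(n-1)$-geometric, so by induction admits a presentation $V_{\bt}$ by a derived Artin $(n-1)$-hypergroupoid. I would assemble an $n$-hypergroupoid $X$ with $X_0 = U_0$, whose higher simplices are built from $V_{\bt}$ by a $\cosk$-style construction, then take a Reedy fibrant replacement in $sd\Aff$. Hyperdescent for the smooth topology (in the simplicial-presheaf model of derived stacks) then identifies $X^{\sharp}$ with $\fX$. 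Compatibility with the underived picture follows because $\pi^0X$ recovers, via Theorem \ref{strict}, a classical presentation of the truncation $\pi^0\fX$.

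\emph{Main obstacle.} The hard step is producing a Reedy fibrant simplicial derived affine scheme presenting $\fX$ while preserving the partial matching conditions of Definition \ref{dnpdef}. The na\"ive assembly from an atlas $U_0$ and an $(n-1)$-hypergroupoid $V_{\bt}$ presenting $U_0 \by^{h}_{\fX} U_0$ yields only a homotopy-coherent structure; rigidifying this to a strict Reedy fibrant object, while keeping the smooth-surjectivity of all partial matching maps and the weak-equivalence condition for $m>n$, requires an inductive small-object-style argument that simultaneously tracks cofibrancy of the tensor factors $A \ten \Delta^n$. This is analogous to the construction of smooth atlases for classical Artin stacks, but has to be carried out carefully in the derived setting so that the resulting $X$ satisfies both Definition \ref{dnpdef} and the homotopical descent statement needed to conclude $X^{\sharp} \simeq \fX$.
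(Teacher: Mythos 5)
Your overall strategy --- induction on $n$, controlling the diagonal by a path-space/d\'ecalage argument in the forward direction and building an atlas level by level for the converse --- is the same as the one used for this theorem in \cite{stacks2} (the present paper only cites that proof rather than reproducing it). However, there are two genuine problems. First, the identification $X_0 \by^h_{X^{\sharp}} X_0 \simeq X_1$ is false for $n \ge 2$: $X_1$ is a single derived affine scheme, whereas $X_0 \by^h_{X^{\sharp}} X_0$ is an $(n-1)$-geometric stack, affine only when $n\le 1$. What the induction actually needs is that $X_0 \by^h_{X^{\sharp}} X_0$ is presented by the d\'ecalage $(\Dec X)_m = X_{m+1}$, which is a derived Artin $(n-1)$-hypergroupoid whose level $0$ is $X_1$. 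Your sentence about based path objects gestures at this, but as written the inductive step tests $(n-1)$-representability of the diagonal against the wrong object. You also use, without justification, that $X_0 \to X^{\sharp}$ is an epimorphism of hypersheaves and that $X^{\sharp}$ satisfies hyperdescent; neither follows formally from the matching conditions, and both are needed before the induction can run.

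Second, and more seriously, the converse direction is not actually proved: you correctly identify the rigidification of the homotopy-coherent assembly of $U_0$ and $V_{\bt}$ as the main obstacle and then leave it unresolved, but that step is the substantive content of the theorem. The way to avoid it is never to produce homotopy-coherent data in the first place: one constructs $X$ strictly by induction on the simplicial degree, taking $X_m$ to be an affine smooth atlas of the relative matching object over $\fX$ determined by the truncation $X_{<m}$ (this matching object is geometric of the appropriate degree by the inductive hypothesis on $n$, so such an atlas exists), with Reedy fibrancy arranged by choosing cofibrant models of the corresponding derived algebras at each stage; Property \ref{hgpdprops}.\ref{truncate} then bounds how far the induction must go. A $\cosk$-style assembly from $U_0$ and a separately chosen $V_{\bt}$, followed by fibrant replacement, does not obviously preserve smooth surjectivity of the partial matching maps --- which is exactly the difficulty you flag --- so as it stands the proposal reduces the theorem to an unproved claim.
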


\begin{remarks}\label{cflurie2}
As with Remarks \ref{cflurie}, there is a difference in terminology between \cite{lurie} and \cite{hag2}. A geometric derived Artin $\infty$-stack $\fX$ is called an $n$-stack (Lurie) or $n$-truncated (To\"en--Vezzosi) if the associated underived Artin $\infty$-stack $\pi^0\fX$ is $n$-truncated. This implies that for $A \in d\Alg$ with $\H_i(A)=0$ for all $i>m$, we have $
 \pi_i(\fX(A))=0$
for all $i>m+n$.
\end{remarks}

\subsection{Morphisms and equivalences}\label{dmor}

\begin{definition}
A  trivial relative  derived Artin $n$-hypergroupoid is a morphism  $X_{\bt} \to Y_{\bt}$ in $sd\Aff$ for which the 
matching maps
$$
X_m \to \Hom_{s\Set}(\pd \Delta^m,X)\by_{\Hom_{s\Set}(\pd \Delta^m,Y )}Y_m 
$$
are smooth surjective fibrations for all $m \ge 0$ and weak equivalences for $m \ge n$.
\end{definition}

The results of \S \ref{sheaf} all now carry over:

\begin{theorem}\label{dbigthm}
The homotopy category of  strongly quasi-compact $n$-geometric derived Artin stacks is given by taking the full subcategory of $sd\Aff$ consisting of derived Artin $n$-hypergroupoids, and formally inverting the trivial relative  derived Artin $n$-hypergroupoids.

In fact, a model for the $\infty$-category of strongly quasi-compact $n$-geometric derived Artin stacks is given by the relative category consisting of derived Artin $n$-hypergroupoids and the class of trivial relative  derived Artin $n$-hypergroupoids.
\end{theorem}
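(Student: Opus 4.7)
The plan is to mirror the proof of Theorem \ref{bigthm}, replacing each ingredient with its derived counterpart. Theorem \ref{strictd} already supplies the bijection on objects: every strongly quasi-compact $n$-geometric derived Artin stack is of the form $X^{\sharp}$ for some derived Artin $n$-hypergroupoid $X$, and conversely. What remains is to identify the correct mapping spaces, i.e.\ to produce a bifunctor on derived Artin $n$-hypergroupoids which computes $\oR\HHom(X^{\sharp},Y^{\sharp})$ and factors $\Hom_{sd\Aff}(-,-)$ universally through the inversion of the class $\cT_n^d$ of trivial relative derived Artin $n$-hypergroupoids; once this is done, the formal universality argument from the proof of Theorem \ref{bigthm} applies and delivers both the homotopy category and the relative-category statement (via the formalism of \cite{BarwickKanEquiv}).

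First I would define, for $X\in sd\Aff$ and a derived Artin $n$-hypergroupoid $Y$,
\[
\HHom_{sd\Aff}^{\sharp}(X,Y):=\varinjlim_{\tilde X\to X}\HHom_{sd\Aff}(\tilde X, Y),
\]
the colimit being taken over a weakly initial filtered system of trivial relative derived Artin $n$-hypergroupoids $\tilde X\to X$. The existence of such a system is the derived analogue of \cite[Corollary \ref{stacks-procofibrantet}]{stacks2}, obtained by iterated pullback of quasi-free \'etale covers at each simplicial level, together with Reedy fibrant replacement so that the matching maps in Definition \ref{dnpdef} are genuine fibrations. Next, I would prove the derived analogue of Lemma \ref{sheaftriv}: if $f\co X\to Y$ lies in $\cT_n^d$, then $f^{\sharp}(A)\co X^{\sharp}(A)\to Y^{\sharp}(A)$ is a weak equivalence for every $A\in d\Alg_R$. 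By hyperdescent this reduces to showing that the homotopy fibres of $f$ over derived points are contractible, which follows by the truncation principle (Property \ref{ttruncate}) together with the defining conditions that the relative matching maps of $f$ are smooth surjective fibrations below level $n$ and weak equivalences above. Consequently $\HHom^{\sharp}_{sd\Aff}$ sends $\cT_n^d$ to weak equivalences in $s\Set$.

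For universality, I would adapt the final step of the proof of Theorem \ref{bigthm}: for any derived Artin $n$-hypergroupoid $Y$ and any $m\ge 0$, the canonical constant-simplex map $Y\to Y^{\Delta^m}$ (internal $\Hom$ in $sd\Aff$, computed using the derived tensor products $A\otimes\Delta^m$ of the example in the excerpt) is a section of a morphism in $\cT_n^d$. Granting this, $\HHom^{\sharp}_{sd\Aff}$ is manifestly built out of $\Hom_{sd\Aff}$ together with formal inversion of $\cT_n^d$, so any bifunctor under $\Hom_{sd\Aff}$ inverting $\cT_n^d$ factors uniquely through $\HHom^{\sharp}_{sd\Aff}$; combined with Theorem \ref{strictd} this gives the homotopy-category statement, and feeding the data $(\{\text{derived Artin }n\text{-hgpds}\},\cT_n^d)$ into \cite{BarwickKanEquiv} yields the $\infty$-categorical refinement.

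The main technical obstacle is the verification that $Y^{\Delta^m}\to Y$ truly lies in $\cT_n^d$. Unlike in the underived case, the derived tensor products appearing in the construction of $Y^{\Delta^m}$ must be controlled levelwise: one needs Reedy cofibrancy of the simplicial object $\Spec(A\otimes\Delta^\bullet)$, stability of smooth surjections under such derived tensor products, and a spectral-sequence/K\"unneth argument to see that the relative matching maps induce isomorphisms on $\H_\ast$ above level $n$. This is exactly where the Reedy fibrant framework and the careful definition of smooth surjection in $d\Aff$ pay off; once those checks are in place the rest of the argument is, as in Theorem \ref{bigthm}, purely formal.
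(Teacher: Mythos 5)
Your proposal is correct and follows essentially the same route as the paper: Remark \ref{derivedrk} states that the proof of Theorem \ref{dbigthm} is exactly that of Theorem \ref{bigthm} with Theorem \ref{duskinmor} replaced by Theorem \ref{duskinmord}, which is precisely the substitution you carry out (derived analogue of Lemma \ref{sheaftriv}, universality via the section $Y \to Y^{\Delta^m}$ of a map in the trivial class, and objects supplied by Theorem \ref{strictd}). The technical points you flag (Reedy fibrancy, the behaviour of $Y^{\Delta^m}$) are exactly those the paper delegates to \cite{stacks2}, so there is no divergence of method.
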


Before proving this, we give the preliminary results necessary.  We can first define $\Hom$-spaces $\HHom_{sd\Aff}$ by $\HHom_{sd\Aff}(X,Y)_n= \Hom_{sd\Aff}(X,Y^{\Delta^n})$, and then  
the following is \cite[Theorem \ref{stacks-duskinmor}]{stacks2}:
\begin{theorem}\label{duskinmord}
If $X \in sd\Aff$ and $Y$ is a derived   Artin $n$-hypergroupoid, then the derived $\Hom$ functor on hypersheaves is given (up to weak equivalence) by
\[
 \oR \HHom(X^{\sharp}, Y^{\sharp}) \simeq \LLim  \HHom_{sd\Aff}(\tilde{X}, Y),      
\]
where $\tilde{X} \to X$ runs over any weakly initial filtered inverse system of   trivial  relative derived Deligne--Mumford $n$-hypergroupoids.      
\end{theorem}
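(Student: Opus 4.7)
The plan is to adapt the proof of Theorem \ref{duskinmor} to the derived setting, replacing strict isomorphisms by weak equivalences throughout. The argument splits into three key inputs, each the derived counterpart of a step in the classical case.

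\textbf{Step 1: Derived local equivalence.} I would begin by establishing the derived analogue of Lemma \ref{sheaftriv}: if $\tilde{X}\to X$ is a trivial relative derived Artin (or DM) $n$-hypergroupoid, then $\tilde{X}^{\sharp}(A)\to X^{\sharp}(A)$ is a weak equivalence in $s\Set$ for every $A\in d\Alg_R$. This is proved by induction on simplicial level: each relative matching map $X_m\to\Hom_{s\Set}(\pd\Delta^m,X)\times_{\Hom_{s\Set}(\pd\Delta^m,Y)}Y_m$ is a smooth surjective fibration, hence admits sections after hypersheafification, and becomes a weak equivalence for $m\ge n$. One inductively lifts $m$-simplices modulo the boundary, with uniqueness up to homotopy once the matching maps become weak equivalences.

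\textbf{Step 2: Reduction to a cofinal system.} The excerpt provides a weakly initial filtered inverse system $\{\tilde{X}_\alpha\to X\}$ of trivial relative derived DM $n$-hypergroupoids, by the derived analogue of \cite[Corollary \ref{stacks-procofibrantet}]{stacks2}. Combining this with Step 1 yields $\oR\HHom(X^{\sharp}, Y^{\sharp})\simeq \holim_\alpha \oR\HHom(\tilde{X}_\alpha^{\sharp}, Y^{\sharp})$, and filteredness of the indexing system collapses this homotopy limit into $\LLim_\alpha \oR\HHom(\tilde{X}_\alpha^{\sharp}, Y^{\sharp})$.

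\textbf{Step 3: Identification on a single cover.} Fixing $\tilde{X}$ in the system, the task is to show $\oR\HHom(\tilde{X}^{\sharp}, Y^{\sharp})\simeq \HHom_{sd\Aff}(\tilde{X},Y)$. By the hypersheafification adjunction the left side is $\oR\HHom(\tilde{X}, Y^{\sharp})$ in presheaves, which decomposes via Yoneda as $\holim_m Y^{\sharp}(\tilde{X}_m)$. The right side is analogously computed as $\holim_m \HHom_{d\Aff}(\tilde{X}_m, Y)$. The comparison then reduces to the levelwise statement that $\HHom_{d\Aff}(\tilde{X}_m, Y)\simeq Y^{\sharp}(\tilde{X}_m)$, namely that mapping from $\tilde{X}_m$ into $Y$ already computes the hypersheafification when $\tilde{X}_m$ ranges over the levels of a cover in the cofinal system.

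\textbf{Main obstacle.} The levelwise comparison in Step 3 is the technical heart: it asserts that $Y$ already satisfies the relevant étale descent when evaluated on the affines $\tilde{X}_m$ from the cofinal system. The idea is that any horn-filling obstruction for $Y$ encountered at $\tilde{X}_m$ can be dissolved by passing to a further refinement $\tilde{X}_\beta\to\tilde{X}_\alpha$ within the filtered system, so the obstruction vanishes in the colimit. Making this precise relies on Theorem \ref{strictd} (which identifies $Y^{\sharp}$ as a genuine $n$-geometric derived Artin stack so that descent is available) together with the procofibrant replacement result (ensuring the indexing system absorbs the necessary étale hypercovers). Additional care is needed to verify Reedy cofibrancy of $\tilde{X}$ and Reedy fibrancy of $Y$ (built into Definition \ref{dnpdef}), so that the cosimplicial totalizations represent honest derived mapping spaces; this can be arranged by taking the $\tilde{X}_\alpha$ to have quasi-free levels.
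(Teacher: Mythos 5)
First, note that this paper does not actually prove Theorem \ref{duskinmord}: it is quoted verbatim from the companion paper (\cite[Theorem \ref{stacks-duskinmor}]{stacks2}), as is its underived analogue Theorem \ref{duskinmor}, so there is no in-text argument to compare against. Judged on its own terms, your proposal has the right overall shape --- reduce to the statement that the filtered colimit of strict mapping spaces over a weakly initial system of \'etale hypercovers computes the derived mapping space into a locally fibrant object (a Verdier-hypercovering-type theorem) --- but the way you have organised Steps 2 and 3 is internally inconsistent, and the genuinely hard step is asserted rather than proved.

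Concretely: in Step 2 you write $\oR\HHom(X^{\sharp},Y^{\sharp})\simeq \holim_{\alpha}\oR\HHom(\tilde{X}_{\alpha}^{\sharp},Y^{\sharp})$ and then claim that ``filteredness collapses this homotopy limit into $\LLim_{\alpha}$.'' That is not a valid inference (a cofiltered homotopy limit is not a filtered colimit); the correct observation is simply that Step 1 makes each $\tilde{X}_{\alpha}^{\sharp}\to X^{\sharp}$ a weak equivalence, so each term $\oR\HHom(\tilde{X}_{\alpha}^{\sharp},Y^{\sharp})$ is individually equivalent to $\oR\HHom(X^{\sharp},Y^{\sharp})$, and hence so is their filtered colimit. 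But then Step 3, which tries to identify $\oR\HHom(\tilde{X}^{\sharp},Y^{\sharp})$ with the \emph{strict} space $\HHom_{sd\Aff}(\tilde{X},Y)$ for a \emph{single} $\tilde{X}$ in the system, cannot be right: if it were, the colimit would be superfluous, whereas your own ``Main obstacle'' paragraph correctly explains that horn-filling obstructions for maps into $Y$ are only killed after refining within the system, i.e.\ only in the colimit. So the actual content of the theorem --- that $\LLim_{\alpha}\HHom_{sd\Aff}(\tilde{X}_{\alpha},Y)$, and not any single term, receives a weak equivalence from/to $\oR\HHom(X^{\sharp},Y^{\sharp})$ --- is exactly the step you dispose of with the heuristic ``obstructions dissolve by passing to a refinement.'' Making that precise (constructing, for a given obstruction over $\tilde{X}_{\alpha}$, a further trivial relative derived DM $n$-hypergroupoid $\tilde{X}_{\beta}\to\tilde{X}_{\alpha}$ lying in the weakly initial system that trivialises it, and doing this coherently for homotopies as well as for simplices, using the smooth-surjective matching maps of $Y$ and Reedy (co)fibrancy to control the totalisations) is the entire proof, and it is missing.
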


\begin{remark}\label{derivedrk}
Given a derived  $R$-algebra $A$, set $X=\Spec A$, and note that $Y^{\sharp}(A)\simeq \oR \HHom(X^{\sharp}, Y^{\sharp})$, so the theorem gives an explicit description of $Y^{\sharp}$. In fact, we can take  the theorem to be  a definition of hypersheafification, and even as a definition of the simplicial category $\cD\cA_n$ of strongly quasi-compact $n$-geometric derived  Artin stacks (with Theorem \ref{strictd} giving the objects and Theorem \ref{duskinmord} the morphisms). 

Again, \cite[Remark \ref{stacks-holimrk}]{stacks2} shows that 
\begin{align*}
 \pi_0\HHom_{s\Aff}^{\sharp}(X,Y)&\simeq \LLim_{X'}\pi_0\HHom_{s\Aff}(X',Y),\\
\pi_n(\HHom_{s\Aff}^{\sharp}(X,Y),f) &\simeq \LLim_{X' }\pi_n(\HHom_{s\Aff}(X',Y) ,f),
\end{align*}
where the limit is taken over the homotopy category of all trivial relative derived Deligne--Mumford $n$-hypergroupoids $X' \to X$.

Note that the proof of Theorem \ref{dbigthm} is now exactly the same as that for Theorem \ref{bigthm}, replacing Theorem \ref{duskinmor} with Theorem \ref{duskinmord}.
\end{remark}

\subsection{Quasi-coherent complexes}\label{dqcohsn}

Since the basic building blocks for derived algebraic geometry are simplicial rings or chain algebras, the correct analogue of quasi-coherent sheaves has to involve complexes.

\begin{definition}
Given a chain algebra $A$, an $A$-module $M$ in complexes is a (possibly unbounded) chain complex $M$ equipped with a distributive chain morphism $A \ten M \to M$. Given a simplicial ring $A$, we just define an $A$-module in complexes to be an $NA$-module in complexes, where $NA$ is the chain algebra given by Dold--Kan normalisation.
\end{definition}

The following is \cite[Proposition \ref{stacks-qcohequiv}]{stacks2}:
\begin{proposition}\label{qcohequiv}
For a derived Artin $n$-hypergroupoid $X$, giving a quasi-coherent complex (in the sense of \cite[\S 5.2]{lurie})
on the $n$-geometric derived stack $X^{\sharp}$ is equivalent (up to quasi-isomorphism) to giving
\begin{enumerate}
 \item an $O(X_n)$-module $\sF^n$ in complexes for each $n$, and
\item quasi-isomorphisms $\pd^i\co \pd_i^*\sF^{n-1} \to \sF^n$ for all $i$ and $n$, satisfying the usual cosimplicial identities. 
\end{enumerate}
\end{proposition}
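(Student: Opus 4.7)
The plan is to realise both sides of the claimed equivalence as homotopy limits of the same cosimplicial diagram of $\infty$-categories of modules. By definition (see \cite[\S 5.2]{lurie}), quasi-coherent complexes form a sheaf of stable $\infty$-categories for the smooth topology on $d\Aff$. Combining this with Theorem \ref{strictd}, which presents $X^{\sharp}$ as the hypersheafification of the simplicial derived affine scheme $X_{\bt}$, descent yields
\[
\mathrm{QCoh}(X^{\sharp}) \;\simeq\; \holim_{[m]\in\Delta}\mathrm{QCoh}(X_m) \;\simeq\; \holim_{[m]\in\Delta}\mathrm{Mod}_{O(X_m)},
\]
where the second equivalence is the elementary fact that quasi-coherent complexes on a derived affine scheme $\Spec O(X_m)$ are just $O(X_m)$-modules in complexes. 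The smooth surjectivity of the partial matching maps guaranteed by Definition \ref{dnpdef} is exactly what makes smooth descent apply along the simplicial atlas at every level.

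Next I would unpack this cosimplicial homotopy limit explicitly. An object of $\holim_{[m]}\mathrm{Mod}_{O(X_m)}$ is a Cartesian section of the corresponding Grothendieck construction: a family $(\sF^m)$ of $O(X_m)$-modules in complexes together with coherent comparison morphisms $\pd^i\co \oL\pd_i^*\sF^{m-1}\to\sF^m$ (and analogous data for degeneracies, determined up to quasi-isomorphism by the coface data) satisfying the cosimplicial identities up to coherent homotopy. Replacing each $\sF^m$ by a cofibrant/flat representative and exploiting that the face maps $\pd_i\co X_m\to X_{m-1}$ are flat on homology, the derived pullbacks $\oL\pd_i^*$ agree up to quasi-isomorphism with the underived $\pd_i^*$, and the comparison morphisms take the stated form $\pd^i\co\pd_i^*\sF^{m-1}\to\sF^m$.

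The main obstacle is the rectification step: producing strict rather than merely coherent cosimplicial descent data. For this I would invoke a Bousfield--Kan style rectification theorem for cosimplicial diagrams in the model category of complexes (or equivalently the functoriality of the normalised Dold--Kan construction in the simplicial setting). The flatness input required is supplied by the hypergroupoid structure: each face map $\pd_i\co X_m\to X_{m-1}$ factors through a partial matching map, which is smooth by Definition \ref{dnpdef} and hence flat, followed by a projection from a fibre product of smooth morphisms, so $\pd_i$ is itself flat on homology. Reedy fibrancy of $X_{\bt}$ further ensures that the homotopy limits of the module categories reduce to strict limits of Cartesian sections, yielding the identification up to quasi-isomorphism. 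Conversely, gluing any such strict datum along the smooth atlas produces a quasi-coherent complex on $X^{\sharp}$, and the two constructions are visibly mutually inverse in the homotopy category, which completes the proof.
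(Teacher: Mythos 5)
Your overall strategy --- realise $\mathrm{QCoh}(X^{\sharp})$ as $\holim_{[m]\in\Delta}\Mod_{O(X_m)}$, unpack the homotopy limit as homotopy-Cartesian cosimplicial modules, then strictify --- is the right one and is essentially the route the paper takes (Lurie's $\mathrm{QCoh}$ is by construction a limit of module categories over the derived affines mapping to the stack, and the content of the proposition is that the simplicial resolution $X_{\bt}$ suffices to compute it). But the step you dispatch in one line, ``descent yields $\mathrm{QCoh}(X^{\sharp})\simeq\holim_{[m]}\mathrm{QCoh}(X_m)$,'' is where the real work lives, and as justified it has a gap. For $n>1$ the simplicial object $X_{\bt}$ is \emph{not} the \v Cech nerve of the atlas $X_0\to X^{\sharp}$ (the map $X_1\to X_0\by_{X^{\sharp}}X_0$ is only a smooth cover, not an isomorphism), so the statement that $\mathrm{QCoh}$ is a sheaf for the smooth topology gives you \v Cech descent but not descent along this augmentation; sheaves of $\infty$-categories need not satisfy hyperdescent. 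What you actually need is either (i) the identification $X^{\sharp}\simeq \hocolim_{[m]}X_m$ as geometric stacks --- proved by induction on $n$ using Property \ref{hgpdprops}.\ref{truncate} and the geometricity of Theorem \ref{strictd}, reducing to iterated \v Cech descent --- combined with the fact that $\mathrm{QCoh}$, being a right Kan extension from affines, sends arbitrary colimits of stacks to limits; or (ii) a cofinality argument comparing $\Delta^{\op}\to(d\Aff\downarrow X^{\sharp})$ with the full index category of Lurie's limit. Without one of these, the first display is an assertion rather than a deduction.

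The remaining steps are sound but could be tightened. Your factorisation argument for flatness of the face maps is correct in substance (all $\pd_i$ on a derived hypergroupoid are smooth, being composites of pullbacks of partial matching maps, hence flat on homology), and together with cofibrant replacement this legitimately replaces $\oL\pd_i^*$ by $\pd_i^*$. For the rectification, note that Reedy fibrancy of $X_{\bt}$ is a condition on the rings and is not what makes the cosimplicial diagram of module categories strictify; what you want is a Reedy-type model structure on cosimplicial modules over the cosimplicial ring $O(X_{\bt})$, in which the homotopy-Cartesian objects compute the homotopy limit of the $\Mod_{O(X_m)}$ --- that is the standard Bousfield--Kan rectification you invoke, correctly attributed but wrongly sourced.
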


In broad terms, quasi-coherent complexes correspond to complexes $\sF_{\bt}$ of \emph{presheaves} of $\O_X$-modules  whose homology \emph{presheaves}   $\sH_n(\sF_{\bt})$ are quasi-coherent.  
To understand how these are related to complexes of quasi-coherent sheaves on schemes, see \cite[Remarks \ref{stacks-hcartrks}]{stacks2}.  

As in \S \ref{qcohsn}, inverse images 
of quasi-coherent complexes are easy to compute, while derived direct images 
are more complicated --- see \cite[\S \ref{stacks-directsn}]{stacks2}.

%

\subsection{Derived schemes and algebraic spaces}

\begin{definition}
 An $n$-geometric derived Deligne--Mumford stack $X$ is called a derived algebraic space (resp. derived scheme) if the associated underived $n$-stack $\pi^0X$ is represented by an algebraic space (resp. a scheme). For this to occur, we must have $n\le 2$ (or $n \le 1$ if $\pi^0X$ is semi-separated).
\end{definition}

 The following is given by \cite[Theorems \ref{stacks-lshfthm}, \ref{stacks-hshfthm} and \ref{stacks-dgshfthm}]{stacks2}:
\begin{theorem}\label{lshfthm}
Fix a  scheme $Z$ over a ring $R$. Then the  homotopy category of  derived schemes $X$ over $R$  with $\pi^0X \simeq Z$ is weakly equivalent to the   homotopy category  of  presheaves $\sA_{\bt}$ of derived $R$-algebras on  the category  of affine open subschemes of $Z$,  satisfying the following:
\begin{enumerate}
\item $\H_0(\sA_{\bt})= \O_{Z}$;
\item for all $i$, the presheaf $\H_i(\sA_{\bt})$ is a quasi-coherent $\O_{Z}$-module.
\end{enumerate}

The same result holds if we replace schemes with algebraic spaces, and open immersions with \'etale maps.
\end{theorem}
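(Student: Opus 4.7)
The plan is to exploit the principle, emphasised in the paragraph preceding the theorem, that an object of $d\Alg$ is essentially a nilpotent thickening of its $\H_0$, so that Zariski (or \'etale) opens of a derived scheme (resp.\ algebraic space) $X$ with $\pi^0 X \simeq Z$ are in natural bijection with those of $Z$. In one direction, I would send such an $X$ to the presheaf of derived rings $\sA(U) := \O_X(U_X)$, where $U_X \hookrightarrow X$ is the derived open immersion lifting $U \hookrightarrow Z$; the localisation theorem \cite[\S \ref{stacks-locthick}]{stacks2} guarantees that $U_X$ is a derived affine scheme and makes $\sA$ functorial. The condition $\H_0(\sA) \cong \O_Z$ holds by definition of $\pi^0 X$, while quasi-coherence of each $\H_i(\sA)$ follows because Zariski (or \'etale) localisation commutes with $\H_i$ up to the usual base-change identity built into our definition of smooth map in $d\Aff$.

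In the other direction, given a presheaf $\sA$ satisfying the two conditions, I would choose an affine open (resp.\ affine \'etale) cover $\{U_i\}$ of $Z$, refine it so every multiple intersection is again covered by sets in the family, and form the \v{C}ech nerve $\check Z_\bullet$ with $\check Z_n = \coprod_{i_0,\ldots,i_n} U_{i_0} \cap \cdots \cap U_{i_n}$. Applying $\sA$ termwise yields a cosimplicial derived ring, hence a simplicial derived affine scheme $X_\bullet$. The key step is then to verify that $X_\bullet$ is a derived (Deligne--Mumford) $1$-hypergroupoid in the sense of Definition \ref{dnpdef}: its underived part $\pi^0 X_\bullet$ is the ordinary \v{C}ech nerve of the cover of $Z$, hence already a DM $1$-hypergroupoid with \'etale surjective partial matching maps, and the hypothesis that each $\H_i(\sA)$ is quasi-coherent supplies exactly the base-change identity $\H_i(A)\otimes_{\H_0 A}\H_0(B) \cong \H_i(B)$ needed to promote smoothness on $\H_0$ to smoothness of the derived matching maps. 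Reedy fibrancy can be arranged by a cofibrant replacement in $d\Alg$ level-by-level without altering homology.

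To obtain an equivalence of homotopy categories, I would then show the two constructions are mutually quasi-inverse. Going $X \leadsto \sA \leadsto X'$ produces $X'$ as a derived hypergroupoid whose derived opens recover those of $X$, so Theorem \ref{dbigthm} identifies $X'$ with $X$ in the homotopy category; the composite $\sA \leadsto X_\bullet \leadsto \sA'$ reproduces $\sA$ on the chosen cover by construction, and quasi-coherence of $\H_*$ forces agreement on every affine open by a descent argument in each homological degree. The dg variant reduces to the simplicial one via the Dold--Kan normalisation, which is a Quillen equivalence in characteristic zero; the algebraic space variant is the same argument with the Zariski topology on $Z$ replaced by the \'etale topology throughout.

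The main obstacle I expect is the derived smoothness check in the backward construction: verifying that the matching maps of $X_\bullet$ are smooth surjections (and weak equivalences beyond level $1$), as opposed to merely smooth surjections after applying $\H_0$. This rests squarely on the quasi-coherence hypothesis, which is what lifts underived smoothness to derived smoothness via the homology base-change identity, and on a careful Reedy cofibrant replacement so that $\H_i$ is not perturbed. A secondary issue is independence of the choice of affine cover, handled by passing to the cofinal system of refinements in a filtered colimit, in the same spirit as Definition \ref{hyperdef}.
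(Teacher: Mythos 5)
Your overall strategy---identify derived opens of $X$ with opens of $Z$ via the ``exotic nilpotent thickening'' principle and the localisation results, restrict $\O_X$ to affine opens in one direction, build a derived hypergroupoid over a \v Cech-type resolution of $Z$ in the other, and use quasi-coherence of the $\H_i$ as precisely the base-change identity in the definition of smooth/\'etale maps of $d\Aff$---is the right one, and is what the cited machinery (\S\ref{stacks-locthick} and Theorem \ref{dbigthm}) is set up to deliver. The paper itself defers the proof entirely to the external reference, so I cannot match you line by line, but the forward direction and the identification of quasi-coherence with the derived lifting of underived \'etaleness are correctly located.

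There are, however, concrete gaps in the backward construction. First, you apply $\sA$ termwise to the \v Cech nerve $\check Z_n = \coprod U_{i_0}\cap\cdots\cap U_{i_n}$, but $\sA$ is a presheaf on \emph{affine} opens only, and for a general (non-semi-separated) $Z$ these intersections need not be affine; ``refining so every intersection is covered'' does not make the intersections themselves affine. This is exactly why the definition in the paper allows $n\le 2$ in general and $n\le 1$ only in the semi-separated case: for general $Z$ you must replace the \v Cech nerve by a genuine hypercover whose terms are disjoint unions of affines (covering the intersections), producing a $2$-hypergroupoid, and then check the horn conditions for that more complicated object. Your argument as written only treats the semi-separated case. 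Second, Reedy fibrancy is \emph{not} obtained by level-by-level cofibrant replacement in $d\Alg$; the matching maps involve limits over $\pd\Delta^m$, so you need an honest Reedy fibrant replacement (equivalently Reedy cofibrant replacement of the cosimplicial derived ring), constructed inductively, together with a check that this replacement preserves the smooth-surjection and weak-equivalence conditions on the partial matching maps. Third, the weak-equivalence condition at levels $m>1$ is not a smoothness statement but a derived descent statement, namely that $\sA(U_{i_0}\cap\cdots\cap U_{i_m})$ is the derived tensor product of the $\sA$ of the facets; this does follow from quasi-coherence of the $\H_i$ plus flatness of open immersions, but it is a separate verification from the base-change identity you invoke, and it is the step where the homotopy-sheaf property of $\sA$ is actually used. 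Finally, to conclude an equivalence of homotopy categories you must also compare morphisms, not just exhibit the two constructions as mutually inverse on objects; this requires showing the restriction functor sends weak equivalences of derived schemes to levelwise quasi-isomorphisms of presheaves and is compatible with the localisation of Theorem \ref{dbigthm} by trivial relative hypergroupoids.
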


In this setting, Proposition \ref{qcohequiv}  becomes:

\begin{proposition}\label{qcohequiv2}
Take a derived scheme $X$, given by a pair $(Z,\sA_{\bt})$ as in Theorem \ref{lshfthm}. Then giving a quasi-coherent complex 
on $X$ is equivalent (up to quasi-isomorphism) to giving a presheaf $\sF_{\bt}$ of $\sA_{\bt}$-modules in complexes for which the presheaves $\H_i(\sF_{\bt})$ are all quasi-coherent as $\O_Z$-modules.
\end{proposition}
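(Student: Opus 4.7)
The plan is to combine Proposition \ref{qcohequiv} with Theorem \ref{lshfthm} via classical \v Cech descent. Given the derived scheme $X = (Z, \sA_\bt)$, I choose an affine open cover $\{U_i\}$ of $Z$, set $U = \coprod_i U_i$, and form the derived Deligne--Mumford $1$-hypergroupoid $X_\bt$ with $X_n := \coprod_{(i_0,\ldots,i_n)} \Spec \sA_\bt(U_{i_0} \cap \cdots \cap U_{i_n})$. Its underived part is the ordinary \v Cech nerve of $U \to Z$, and Theorem \ref{lshfthm} ensures $X_\bt^\sharp \simeq X$ as derived stacks. Proposition \ref{qcohequiv} then identifies quasi-coherent complexes on $X$ with cosimplicial systems $(\sF^n, \pd^i)$ of $\sA_\bt(U_{i_0} \cap \cdots \cap U_{i_n})$-modules together with compatible quasi-isomorphisms $\pd^i\co \pd_i^* \sF^{n-1} \to \sF^n$ satisfying the cosimplicial identities.

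Next I would show that this cosimplicial data is equivalent, up to quasi-isomorphism, to a presheaf $\sF_\bt$ of $\sA_\bt$-modules on the affine open subschemes of $Z$ whose homology presheaves are quasi-coherent. In one direction, given $\sF_\bt$, restriction to the intersections $U_{i_0} \cap \cdots \cap U_{i_n}$ yields the cosimplicial system; the quasi-isomorphism condition on each $\pd^i$ follows from the quasi-coherence of $\H_*(\sF_\bt)$, since for an affine open $V' \subseteq V$ and a quasi-coherent $\O_V$-module $\cG$ one has $\cG(V') \cong \cG(V) \otimes_{\O(V)} \O(V')$, and a convergent spectral sequence then identifies $\sF_\bt(V) \otimes^{\oL}_{\sA_\bt(V)} \sA_\bt(V')$ with $\sF_\bt(V')$. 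Conversely, from cosimplicial data $(\sF^n, \pd^i)$ I would reconstruct a presheaf on an arbitrary affine open $V \subseteq Z$ as the homotopy limit (totalisation) of the \v Cech complex obtained by restricting $\sF^\bt$ to $V \cap U$. The resulting homology presheaves are automatically quasi-coherent because they agree with the classical quasi-coherent sheaves descended from the underived \v Cech data $\{\H_*(\sF^n)\}$ by ordinary faithfully flat descent.

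The main obstacle will be verifying that these two constructions are mutually inverse up to quasi-isomorphism. This reduces to a homotopy \v Cech descent statement: for any presheaf $\sF_\bt$ of $\sA_\bt$-modules with quasi-coherent homology, the natural map $\sF_\bt(V) \to \holim_n \sF_\bt((V \cap U)^{n+1})$ should be a quasi-isomorphism for every affine open $V \subseteq Z$. I would establish this using a hyperdescent spectral sequence whose $E_2$-page is the \v Cech cohomology $\check{\H}^p(V \cap U;\, \H_q(\sF_\bt))$ of the cover with coefficients in the quasi-coherent homology presheaves. This spectral sequence collapses onto the edge because $V$ and each intersection $V \cap U_{i_0} \cap \cdots \cap U_{i_n}$ are affine and the $\H_q(\sF_\bt)$ are quasi-coherent, so their higher \v Cech cohomology on an affine cover of an affine scheme vanishes, leaving only $\check{\H}^0 = \H_q(\sF_\bt(V))$.
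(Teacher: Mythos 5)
Your argument is correct and follows essentially the route the paper intends: present $X$ by the \v{C}ech-nerve derived hypergroupoid of an affine cover, apply Proposition \ref{qcohequiv}, and translate the Cartesian cosimplicial data into presheaves of $\sA_{\bt}$-modules with quasi-coherent homology via the \v{C}ech descent spectral sequence, which degenerates by affineness and quasi-coherence of the $\H_q$ exactly as you say. The only points worth making explicit are the standing quasi-compactness/semi-separatedness hypotheses (needed for the intersections $U_{i_0}\cap\cdots\cap U_{i_n}$ to be affine, so that your \v{C}ech nerve is a simplicial \emph{derived affine} scheme and a $1$-hypergroupoid suffices) and a Reedy-fibrant replacement of that nerve before invoking Proposition \ref{qcohequiv}; both are routine under the paper's standing conventions.
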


\bibliographystyle{alphanum}
\bibliography{references}
\end{document}